\theoremstyle{definition}
\newtheorem{definition}{Definition}[section]
\newtheorem{question}[definition]{Question}
\newtheorem{problem}[definition]{Problem}
\newtheorem{claim}[definition]{Claim}
\theoremstyle{plain}
\newtheorem{theorem}[definition]{Theorem}
\newtheorem{proposition}[definition]{Proposition}
\newtheorem{lemma}[definition]{Lemma}
\newtheorem{corollary}[definition]{Corollary}
\newtheorem{remark}[definition]{Remark}
\title[More Ramsey theory for highly connected subgraphs]{More Ramsey theory for highly connected monochromatic subgraphs}
\author{Michael Hru\v{s}\'{a}k, Saharon Shelah and Jing Zhang}
\date{}
\address{Centro de Ciencas Matem\'aticas\\
UNAM\\
A.P. 61-3, Xangari, Morelia, Michoac\'an\\
58089, M\'exico}
\email{michael@matmor.unam.mx}
\address{Department of Mathematics, Rutgers University, Hill Center, Piscataway, New Jersey, U.S.A. 08854-8019}
\address{Institute of Mathematics, Hebrew University, Givat Ram, Jerusalem 91904, Israel}
\email{shelah@math.rutgers.edu}
\address{Department of Mathematics,
University of Toronto,
Bahen Centre,
40 St. George Street, Room 6290,
Toronto, Ontario,
Canada,
M5S 2E4}
\email{jingzhan@alumni.cmu.edu}
\thanks{
{\it 2010 MSC.} 03E02,03E10\newline
{\it Key words and phrases.} highly connected graph, saturated ideal, partition relations, forcing.\newline
{Research of the first author was partially supported  by a PAPIIT grant IN101323 and CONACyT grant A1-S-16164.
Research of the second author was partially supported by the NSF grant DMS 1833363 and by the Israel Science Foundation (ISF) grant 1838/19.
Research of the third author was supported by the  European Research Council (grant
agreement ERC-2018-StG 802756), NSERC grants RGPIN-2019-04311, RGPIN-2021-03549 and RGPIN-2016-06541. The paper appears as number 1242 on the second author's publications list.
}}
\begin{document}

\begin{abstract} An infinite graph is said to be highly connected if the induced subgraph on the complement of any set of vertices of smaller size is connected. We continue the study of  weaker versions of Ramsey's Theorem  on uncountable cardinals asserting that 
if we color edges of the complete graph we can find a large highly connected monochromatic subgraph. In particular, several  questions of Bergfalk, Hru\v{s}\'ak and Shelah  \cite{BergfalkHrusakShelah}
are answered by showing that assuming the consistency of suitable large cardinals the following are relatively consistent with {\sf{ZFC}}:
\begin{itemize}
\item $\kappa\to_{hc} (\kappa)^2_\omega$ for every regular cardinal $\kappa\geq \aleph_2$, 
\item $\neg\mathsf{CH}+ \aleph_2 \to_{hc} (\aleph_1)^2_\omega$.
\end{itemize}
Building on a work of Lambie-Hanson \cite{lambiehanson2022note}, we also show that
\begin{itemize}
\item $\aleph_2 \to_{hc} [\aleph_2]^2_{\omega,2}$ is consistent with $\neg\mathsf{CH}$.
\end{itemize}
To prove these results, we use the existence of ideals with strong combinatorial properties after collapsing suitable large cardinals.
\end{abstract}

\maketitle

\section{Introduction}

The paper studies weak versions of the Ramsey Theorem on uncountable cardinals. Following \cite{BergfalkHrusakShelah} we say that a graph $G=(X,E)$ is \emph{highly connected} if for every $Y\subseteq X$ of cardinality strictly smaller than $|X|$, the subgraph $(X\setminus Y , E\cap [X\setminus Y]^2)$ is connected.  Given cardinal numbers $\theta \leq \lambda \leq \kappa$,
$$\kappa\to_{hc}(\lambda)^2_\theta$$
denotes the statement that for every $c:[\kappa]^2\to\theta$ there is an $\xi\in\theta$  and $A\in [\kappa]^\lambda$ such that $(A, c^{-1}(\xi)\cap [A]^2)$ is highly connected, in  which case we shall say that
\emph{$A$ is highly connected in color $\xi$}.

The original motivation for studying this partition relation came from the study of higher derived limits in forcing extensions (see \cite{BergfalkLambie, BerfalkHrusakLambie, BannisterBergfalkMooreTodorcevic} and \cite{VelickovicVignati}) and is related in spirit to both the \emph{partition hypotheses} of \cite{BannisterBergfalkMooreTodorcevic} and another weak Ramsey property concerning the so-called \emph{topological $K_\kappa$} (first studied by Erd\H{o}s and Hajnal in \cite{ErdosHajnal}) considered by Komj\'ath and Shelah in \cite{KomjathShelah}.

This  paper continues the  study initiated in \cite{BergfalkHrusakShelah} and further investigated in \cite{bergfalk, lambiehanson2022note} of those cardinals $\theta < \lambda \leq \kappa$ for which the $\to_{hc}$ arrow holds. Among the facts proved in   \cite{BergfalkHrusakShelah} are:
\begin{itemize}
\item  For every infinite cardinal  $\kappa$ and natural number $n$, $\kappa\to_{hc}(\kappa)^2_n$,
\item  If $\kappa\leq 2^\theta$ then $\kappa\not\to_{hc}(\kappa)^2_\theta$, and 
\item If $\lambda=\lambda^\theta$ then $\lambda^+\to_{hc}(\lambda)^2_\theta$;
\end{itemize}
in particular, $2^\omega\not\to_{hc} (2^\omega)^2_\omega$. On the other hand, it was shown there that 
assuming  the existence of a weakly compact cardinal, consistently $2^{\omega_1}\to_{hc}(2^{\omega_1})^2_\omega$. However, in the model witnessing this relation, $2^{\omega_1}$ is weakly inaccessible, being a former large cardinal in a forcing extension by a poset satisfying a small chain condition.

In light of the results mentioned above, the following natural questions were raised in \cite{BergfalkHrusakShelah} and \cite{bergfalk}:

\begin{question} 
${ 		}$
\begin{enumerate}
\item Is it consistent that $\kappa\to_{hc}(\kappa)^2_{\omega}$ holds for an accessible $\kappa$? For example, when $\kappa$ is $\aleph_2$ or $\aleph_{\omega+1}$?
\item Is $\aleph_2\to_{hc}(\aleph_1)^2_\omega$ equivalent to the Continuum Hypothesis?
\end{enumerate}
\end{question}

Note that by a result of Lambie-Hanson \cite{lambiehanson2022note}, if $\kappa\to_{hc}(\kappa)^2_{\omega}$ holds, then $\square(\kappa)$ necessarily fails. In particular, if $\aleph_2\to_{hc}(\aleph_2)^2_{\omega}$ were to hold, our arguments would require at least a weakly compact cardinal and if $\aleph_{\omega+1}\to_{hc}(\aleph_{\omega+1})^2_{\omega}$ were to hold, then our arguments would require significantly stronger large cardinals.

Here we answer the first question in the positive and the second question in the negative
 by analyzing \emph{remnants of large cardinal properties} on smaller cardinals after suitable forcing, often the Levy or Mitchell collapse, in the form of the existence of ideals having strong combinatorial properties.

We finish this section with a few more definitions and notations. Let $\kappa$ be a regular uncountable cardinal.

\begin{definition}
Fix $k\in \omega$. We let 
$$\kappa\to [\kappa]^2_{\omega,k}$$
to abbreviate the assertion that for any $c: [\kappa]^2\to \omega$, there exist $H\in [\kappa]^{\kappa}$ and $K \in [\omega]^k$ such that $(H, c^{-1}(K))\cap [H]^2)$ is highly connected.
\end{definition}

The following is a more refined variation of the highly connected partition relations, conditioned on the lengths of the paths.

\begin{definition}
Fix $n\in \omega$. Let
$\kappa\to_{hc, <n}(\kappa)^2_\omega$ abbreviate $\kappa\to_{hc}(\kappa)^2_\omega$ via paths of length $<n$. More precisely, it asserts: for any $c: [\kappa]^2\to \omega$, there exists $A\in [\kappa]^\kappa$ and $i\in \omega$ such that for any $C\in [A]^{<\kappa}$ and $\alpha,\beta\in A \backslash C$, there exist $l<n$ and a path $\langle \gamma_k: k<l+1\rangle \subseteq A\backslash C$ with $\gamma_0=\alpha$ and $ \gamma_{l}=\beta$ such that for all $j<l$, $c(\gamma_{j}, \gamma_{j+1})=i$.
\end{definition}

The organization of the paper is 

	\begin{enumerate}
	\item In Section \ref{section: nonch}, we establish the consistency of $\aleph_2\to_{hc} (\aleph_1)^2_{\omega} $ and $\neg \mathsf{CH}$,
	\item in Section \ref{section: 2-precip}, we isolate and investigate 2-precipitous ideals (see Definition \ref{definition: 2-precip}) whose existence implies $\kappa\to_{hc}[\kappa]^2_{\omega,2}$,
	\item in Section \ref{section: consistency2-precip}, we demonstrate two methods of constructing 2-precipitous ideals and show that $\aleph_2\to_{hc}[\aleph_2]^2_{\omega,2} + 2^{\aleph_0} \geq \aleph_2$ is consistent,
	\item in Section \ref{section: closedideal}, we deduce the consistency of $\aleph_2\to_{hc}(\aleph_2)^2_{\omega}$ from an ideal hypothesis,
	\item in Section \ref{section: consistencyClosed}, we sketch how to use large cardinals to establish the consistency of the ideal hypothesis used in Section \ref{section: closedideal},
	\item in Section \ref{section: lengthPaths}, we show we cannot improve the result in Section \ref{section: consistencyClosed} by making the lengths of the paths required to connect vertices shorter,
	\item finally in Section \ref{section: questions}, we finish with some open questions.
	\end{enumerate}

\medskip

\section{The consistency of $\aleph_2\to_{hc} (\aleph_1)^2_{\omega} + \neg \mathsf{CH}$}\label{section: nonch}
We call an ideal $I$ on $\omega_1$ \emph{$\aleph_1$-proper with respect to $S\subseteq P_{\aleph_2}(H(\theta))$} where $\theta$ is a large enough regular cardinal if for any $M\in S$ and $X\in M\cap I^+$, there exists an extension $Y\subseteq_I X$ such that $Y$ is $(M,I^+)$-generic, meaning that for any dense $D\subseteq I^+$ with $D\in M$ and any $Y'\subseteq_I Y$, there exists $Z\in D\cap M$ such that $Y'\cap Z\in I^+$.

\begin{lemma}\label{lemma: properness}
If $I$ is $\aleph_1$-proper with respect to $\{M\}$ with $M\prec H(\theta)$ of size $\aleph_1$ containing $I$  then whenever $Y\in I^+$ is a $(M, I^+)$-generic condition, the following holds: 
\newline
for any $E\subseteq I^+$ in $M$, if there is some $Y'\in E$ such that $Y\subseteq_I Y'$, then there exists $Z\in E\cap M$ such that $Y\cap Z\in I^+$.
\end{lemma}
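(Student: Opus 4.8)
The plan is to convert the given, possibly badly-behaved set $E\subseteq I^+$ into a genuinely \emph{dense} subset $D\subseteq I^+$ that still lies in $M$, and then apply the $(M,I^+)$-genericity of $Y$ to $D$. The hypothesis that $Y$ is almost contained in some member of $E$ will be exactly the leverage needed to force the generic witness to come from $E$ rather than from the padding we must add to make $D$ dense.

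Concretely, I would set
$$D=\{W\in I^+:\exists A\in E\ (W\subseteq_I A)\}\cup\{W\in I^+:\forall A\in E\ (W\cap A\in I)\},$$
namely the sets refining into $E$ together with the sets orthogonal to all of $E$. First I would check that $D$ is dense in $(I^+,\subseteq_I)$: given $W_0\in I^+$, either $W_0\cap A\in I$ for every $A\in E$, so $W_0$ already lies in the second block, or else some $A\in E$ meets $W_0$ positively, and then $W_0\cap A\subseteq_I W_0$ lies in the first block. Since $D$ is defined by a formula whose only parameters are $E$ and $I$, both of which belong to $M$, elementarity gives $D\in M$. Now I would apply genericity to $D$ with the trivial refinement $Y\subseteq_I Y$, obtaining some $Z\in D\cap M$ with $Y\cap Z\in I^+$. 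The decisive step is to exclude that $Z$ lies in the second block: if $Z\cap A\in I$ for all $A\in E$, then picking $Y^*\in E$ with $Y\subseteq_I Y^*$ (the hypothesis) and noting $Y\cap Z\subseteq (Z\cap Y^*)\cup(Y\setminus Y^*)$ forces $Y\cap Z\in I$, contradicting $Y\cap Z\in I^+$. Hence $Z$ refines into $E$, so $M$ sees a witness, and elementarity yields $Z^*\in E\cap M$ with $Z\subseteq_I Z^*$. Finally, since $(Y\cap Z)\setminus Z^*\subseteq Z\setminus Z^*\in I$ while $Y\cap Z\in I^+$, we get $Y\cap Z^*\supseteq (Y\cap Z)\cap Z^*\in I^+$, and $Z^*$ is the desired element of $E\cap M$.

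The one genuinely delicate point is that $E$ is an arbitrary subset of $I^+$, neither dense nor downward closed, so genericity cannot be invoked against $E$ directly. The device that resolves this is the dichotomy built into $D$ (refine into $E$ versus avoid $E$ altogether), after which the almost-containment hypothesis on $Y$ is precisely what kills the ``avoid'' alternative and pins the generic condition onto a true member of $E$. Everything else is routine bookkeeping with $\subseteq_I$, using only that $I$ is an ideal and that $Y\cap Z\in I^+$.
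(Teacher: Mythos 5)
Your proof is correct and takes essentially the same route as the paper's: you construct the identical dense set (sets that $\subseteq_I$-refine into $E$ together with sets $I$-orthogonal to every member of $E$), note it lies in $M$ by elementarity, apply $(M,I^+)$-genericity, and use the hypothesis $Y\subseteq_I Y'$ to rule out the orthogonal alternative before pulling a witness from $E\cap M$ by elementarity. Your closing verification that $Y\cap Z^*\in I^+$ is slightly more explicit than the paper's, but the argument is the same in substance.
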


\begin{proof}
Define a dense subseteq $D_E\subseteq I^+$ in $M$ as follows: $A\in D_E$ iff either there exists $B\in E$, $A\subseteq_I B$ or for all $B\in E$, $A\cap B =_I \emptyset$. By the hypothesis, there exists $Z'\in D_E\cap M$ such that $Y\cap Z'\in I^+$. Note that there exists $Z\in E$ such that $Z'\subseteq_I Z$ since $Z'\cap Y \neq_I \emptyset$ and $Y\subseteq_I Y'\in E$. The elementarity of $M$ then guarantees the existence of such $Z\in M$.
\end{proof}

If $I$ is $\aleph_2$-saturated then $I$ is $\aleph_1$-proper with respect to a closed unbounded subset of $P_{\aleph_2}(H(\theta))$ for sufficiently large $\theta$. There are many models where $\omega_1$ carries a $\sigma$-complete $\aleph_2$-saturated ideal and $\mathsf{CH}$ fails. For example, they are both consequences of Martin's Maximum \cite{MM}.

\begin{proposition}
If there exists a $\sigma$-complete $\aleph_1$-proper ideal on $\omega_1$ with respect to a stationary subset of $\{X\in P_{\aleph_2}(H(\theta)): \sup X\cap \omega_2\in \mathrm{cof}(\omega_1) \}$ for some large enough $\theta$, then $\aleph_2\to_{hc} (\aleph_1)^2_{\omega}$.\footnote{Originally we used the hypothesis that $\omega_1$ carries a countably complete $\aleph_2$-Knaster ideal. Stevo Todorcevic pointed out that our proof should work from a weaker saturation hypothesis, such as $\aleph_2$-saturation.}
\end{proposition}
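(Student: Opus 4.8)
The plan is to use the ideal $I$ to manufacture, from a given $c:[\omega_2]^2\to\omega$, a single color $\xi$ together with an $\aleph_1$-sized set of ``pivots'' through which every relevant pair of vertices can be joined by a monochromatic path of length two, robustly enough that no countable deletion disconnects the graph. First I would fix $c$ and, using stationarity of the hypothesised set, choose $M\prec H(\theta)$ of size $\aleph_1$ with $c,I\in M$, $\omega_1\subseteq M$, and $\delta:=\sup(M\cap\omega_2)$ of cofinality $\omega_1$. I then fix, externally to $M$, an increasing sequence $\langle\delta_i:i<\omega_1\rangle$ cofinal in $\delta$ with each $\delta_i\in M$; this is precisely where the cofinality-$\omega_1$ clause is spent, since it matches the index set of the pivots with the underlying set $\omega_1$ of the ideal. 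For $i<\omega_1$ and $\xi<\omega$ put $V_i^\xi=\{i'<\omega_1:c(\delta_i,\delta_{i'})=\xi\}$; as the pairs are unordered, $i'\in V_i^\xi \iff i\in V_{i'}^\xi$, so each $\xi$ induces a graph on $\omega_1$.

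The combinatorial target is a color $\xi$ and a set $Y\in I^+$ that is \emph{self-connected modulo $I$} in $\xi$, meaning $Y\setminus V_i^\xi\in I$ for every $i\in Y$. Granting this, I claim $A:=\{\delta_i:i\in Y\}$ is highly connected in color $\xi$. Indeed, for $i_0,i_1\in Y$ the set $Y\cap V_{i_0}^\xi\cap V_{i_1}^\xi$ equals $Y$ modulo $I$, hence is $I$-positive; since $I$ is $\sigma$-complete and contains the countable sets, deleting any countable set from it still leaves an $I$-positive, so nonempty, remainder, which supplies a pivot $\delta_{i'}\in A$ avoiding any prescribed countable $C$ and giving the length-two path $\delta_{i_0}-\delta_{i'}-\delta_{i_1}$ in color $\xi$. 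As finite paths and their colorings are absolute, it suffices to produce such $\xi$ and $Y$ with $Y\in V$, even if the production passes through a generic extension.

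To locate the color I would note that a pigeonhole already available in $V$ succeeds: for each $i$ the countably many $V_i^\xi$ partition $\omega_1$, so some $V_i^\xi\in I^+$, and $\sigma$-completeness then yields a single $\xi$ with $T^*:=\{i:V_i^\xi\in I^+\}\in I^+$. What remains missing is that the family $\{V_i^\xi:i\in T^*\}$ be \emph{pairwise} compatible modulo $I$, which positivity alone does not give. This is exactly what the generic ultrapower $j:V\to N$ by $I^+$ provides pointwise: $\sigma$-completeness makes $\omega_1$ the critical point and forces any two members of the generic ultrafilter $G$ to meet $I$-positively. Using the diagonal pivot $\zeta=[\langle\delta_i:i<\omega_1\rangle]_G$ as a top vertex, {\L}o\'{s}' theorem identifies $j(c)(j(\delta_i),\zeta)$ with the unique $\xi_i$ for which $V_i^{\xi_i}\in G$, so that $\{i\in T^*:V_i^\xi\in G\}$ is a self-connected witness in $V[G]$; the task is to encode it by a single $Y\in I^+$ lying in $V$.

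The main obstacle, and the only place where the full strength of the hypothesis is used, is this pull-back into $V$: $I$-positivity is not closed under intersection, so a self-connected $Y$ cannot be read off from $T^*$ directly, and the witness found in $V[G]$ need not be a ground-model set. Here I would invoke $\aleph_1$-properness through Lemma~\ref{lemma: properness}: taking $Y$ to be an $(M,I^+)$-generic condition and feeding the densely-defined families that decide the values $V_i^\xi$ into the Lemma, one reflects the generic choice of color and pivot set down to $M$, obtaining an actual $Y\in I^+\cap M\subseteq V$ with $Y\subseteq_I V_i^\xi$ for all $i\in Y$. The delicate point is verifying that these families can be coded as dense subsets of $I^+$ belonging to $M$, even though the pivot enumeration $\langle\delta_i\rangle$ is external to $M$; it is precisely the hypotheses $\omega_1\subseteq M$ and $\mathrm{cf}(\delta)=\omega_1$ that align the ideal's domain with the pivots and make this coding, and hence the reflection, go through.
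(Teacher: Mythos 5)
Your combinatorial target is provably too strong, and the proof collapses there rather than at the pull-back step you flagged as delicate. A set $Y\in I^+$ that is ``self-connected modulo $I$'' in color $\xi$ --- i.e., $Y\setminus V_i^\xi\in I$ for \emph{every} $i\in Y$ --- yields an uncountable monochromatic clique: recursively choose $i_\alpha\in \bigl(Y\cap\bigcap_{\beta<\alpha}V_{i_\beta}^\xi\bigr)\setminus\{i_\beta:\beta<\alpha\}$ for $\alpha<\omega_1$, which is possible since at each stage you remove only countably many $I$-null sets plus a countable set, and $I$ is $\sigma$-complete (and, as you yourself use, contains countable sets); then $c(\delta_{i_\beta},\delta_{i_\alpha})=\xi$ for all $\beta<\alpha<\omega_1$. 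But whenever $\aleph_2\le 2^{\aleph_0}$, Sierpi\'nski's coloring (fix an injection $e:\omega_2\to\mathbb{R}$ and compare the well-order with the real order) gives $c:[\omega_2]^2\to 2$ with \emph{no} monochromatic set of size $\aleph_1$, since such a set would produce a strictly increasing or decreasing $\omega_1$-sequence of reals. The hypothesis of the proposition is consistent with $\neg\mathsf{CH}$ --- indeed that is the entire point of this section, and under Martin's Maximum one has both a $\sigma$-complete $\aleph_2$-saturated ideal on $\omega_1$ (hence the properness hypothesis) and $2^{\aleph_0}=\aleph_2$. In such a model your pair $(\xi,Y)$ simply cannot exist for a Sierpi\'nski coloring, no matter how the generic-ultrapower and reflection machinery is arranged. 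There is also a secondary problem you partially acknowledge: $V_i^\xi$ is defined from the \emph{entire} external enumeration $\langle\delta_{i'}:i'<\omega_1\rangle$, not just from $\delta_i$, so these sets and the associated dense families need not belong to $M$, and Lemma \ref{lemma: properness} cannot be applied to them; the hypotheses $\omega_1\subseteq M$ and $\mathrm{cf}(\delta)=\omega_1$ do not repair this.

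The paper's proof succeeds precisely because it aims at a strictly weaker, bipartite configuration that Sierpi\'nski-type colorings cannot rule out. It takes $A\subseteq\omega_1$ ($I$-positive, of the form $Y\cap X_{\rho,k}$ for an $(M,I^+)$-generic $Y$ and an anchor $\rho$ above $\sup(M\cap\omega_2)$) together with a separately built $B\subseteq M\cap(\omega_2\setminus\omega_1)$, arranging only that every \emph{pair} from $A$ has uncountably many common $k$-neighbors in $B$ and every $\beta\in B$ has $I$-positively many $k$-neighbors in $A$; each $\beta_\alpha\in B$ is found by applying Lemma \ref{lemma: properness} to a family $E$ of sets $X_{\beta,k}$ parameterized by single ordinals and finite data, all of which genuinely lie in $M$. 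High connectivity of $A\cup B$ is then witnessed by paths with up to three edges (e.g., $\alpha,\beta',\alpha',\beta$), not the length-two paths your mod-$I$ clique would give. If you want to salvage your approach, you must weaken ``$Y\setminus V_i^\xi\in I$ for all $i\in Y$'' to a common-neighbor condition of this bipartite kind; as stated, the target is unattainable from the hypothesis.
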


\begin{proof}
Given $c: [\omega_2]^2\to \omega$, we will find $A\in [\omega_1]^{\aleph_1}$ and $B\in [\omega_2 \setminus \omega_1]^{\aleph_1}$ satisfying the following properties: there exists $k\in \omega$ such that
\begin{enumerate}
\item for any $\alpha_0,\alpha_1\in A$, there are uncountably many $\beta\in B$ such that $c(\alpha_0,\beta)=k=c(\alpha_1, \beta)$, and
\item for any $\beta_0\in B$, there are uncountably many $\alpha\in A$ satisfies that $c(\alpha,\beta_0)=k$.
\end{enumerate}
\begin{claim}
$A\cup B$ is highly connected in the color $k$.
\end{claim}

\begin{proof}[Proof of the claim]
Let $C$ be the countable set of vertices being removed. 
If $\alpha_0, \alpha_1\in A\backslash C$, then it follows immediately by the first requirement that there is some $\beta\in B\backslash C$ such that $c(\alpha_0,\beta)=c(\alpha_1,\beta)=k$.

Let us check the case when $\alpha\in A\backslash C$ and $\beta\in B\backslash C$. We can find some large enough $\alpha'\in A\backslash C$ such that $c(\alpha',\beta)=k$. After that we find some $\beta'\in B \backslash C$ such that $c(\alpha',\beta')=k=c(\alpha,\beta')$. Then $\alpha$ is connected to $\beta$ via the $k$-path: $\alpha, \beta', \alpha',\beta$.

If $\beta_0,\beta_1\in B\backslash C$, then we can easily reduce to the previous case by finding some large enough $\alpha_0\in A\backslash C$ such that $c(\alpha_0, \beta_0)=k$. Apply the previous analysis to $\alpha_0\in A\backslash C, \beta_1\in B\backslash C$.
\end{proof}

We proceed to find $A, B, k$ as above. 
For each $\alpha\in \omega_2\backslash\omega_1$ and $i\in \omega$, let 
$$X_{\alpha,i}=\{\gamma\in \omega_1: c(\gamma,\alpha)=i\}.$$
Let $M\prec H(\theta)$ of size $\aleph_1$ contain $I, c$ with $\sup M\cap \omega_2\in \mathrm{cof}(\omega_1)$ and let $Y\in I^+$ be $(M,I^+)$-generic. Let $\rho \in \omega_2 \backslash \sup M\cap \omega_2$. By the $\sigma$-completeness of $I$, find some $k\in \omega$ such that $A=Y\cap X_{\rho, k}\in I^+$, which is still $(M,I^+)$-generic, since it extends $Y$ which is $(M, I^+)$-generic. Finally, let us define $B$ recursively. Let $\langle a^i=(a^i_0, a^i_1): i<\omega_1\rangle$ enumerate $[A]^2$ with unbounded repetitions.
Suppose we have defined $\langle \beta_j: j<\alpha\rangle$ for some $\alpha<\omega_1$ satisfying that for all $j<\alpha$,
\begin{enumerate}
\item $a^j\subseteq X_{\beta_j, k}$ and 
\item $X_{\beta_j,k}\cap A\in I^+$.
\end{enumerate}

It is clear if we may extend the construction through all $\alpha<\omega_1$, then $A, B=\{\beta_j: j<\omega_1\}$ and $k$ are as desired.

Suppose we are at the $\alpha$-th step of the construction and let us find $\beta_\alpha$ maintaining the same requirements. Let $\bar{\beta}=\min (M \backslash\sup_{j<\alpha}\beta_j)<\sup M\cap \omega_2$. Consider 
$E=\{X_{\beta,k}\in I^+: a^\alpha_0, a^\alpha_1\in X_{\beta,k}, \beta>\bar{\beta}\}$. In particular, $E\in M$ and $A\subseteq_I X_{\rho,k}\in E$. By Lemma \ref{lemma: properness}, there exists a $\beta>\bar{\beta}$ such that $X_{\beta,k}\in M\cap E$ such that $A\cap X_{\beta,k}\in I^+$; letting $\beta_\alpha=\beta$ completes the $\alpha$-th step.
\end{proof}

\section{2-precipitous ideals on $\kappa$ and $\kappa\to_{hc} [\kappa]^2_{\omega,2}$}\label{section: 2-precip}

Chris Lambie-Hanson \cite{lambiehanson2022note} showed that adding weakly compact many Cohen reals forces that $2^\omega\to_{hc} [2^\omega]^2_{\omega,2}$, in contrast with the {\sf ZFC} fact that $2^\omega \not\to_{hc} (2^\omega)^2_{\omega}$. He also demonstrated that such partition relations already have non-trivial consistency strength, by showing that $\square(\kappa)$ implies $\kappa\not\to_{hc}[\kappa]^2_{\omega, <\omega}$.

In this section, we investigate the ideal hypothesis on $\kappa$ that implies $\kappa\to_{hc} [\kappa]^2_{\omega,2}$. In particular, such analysis enables us to have more consistent scenarios, such as a model where $2^{\aleph_0}\geq \aleph_2$ and $\aleph_2\to_{hc} [\aleph_2]^2_{\omega,2}$ both hold.

\begin{definition}\label{definition: 2-precip}
We say an ideal $I$ on $\kappa$ is \emph{2-precipitous} if Player Empty does not have a winning strategy in the following game $G_{I}$ with perfect information: Player Empty and Nonempty take turns playing a $\subseteq$-decreasing sequence of pairs of $I$-positive sets $\langle (A_n, B_n): n\in \omega\rangle$ with Player Empty starting the game. Player Nonempty wins iff there exist $\alpha<\beta$ with $\alpha\in \bigcap_{n\in \omega} A_n$ and $\beta\in \bigcap_{n\in \omega} B_n$.
\end{definition}

\begin{lemma}\label{lemma: denseequivalence}
Fix a dense subset $D\subseteq P(\kappa)/I$.
Player Empty has a winning strategy in $G_I$ iff Player Empty has a winning strategy $\sigma$ in $G_I$ such that $range(\sigma)\subseteq \{A \backslash M: A\in D, M\in I\}$.
\end{lemma}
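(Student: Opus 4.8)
The backward direction is immediate, since a strategy whose range lies in $\{A\setminus M: A\in D, M\in I\}$ is in particular a winning strategy for Empty; so the whole content is the forward implication. The plan is to take an arbitrary winning strategy $\sigma$ for Empty and run it as a hidden ``shadow'' game alongside the actual game, where in the real game Empty replaces each move dictated by $\sigma$ by a $\subseteq_I$-smaller move drawn from the prescribed collection, while copying Nonempty's real moves verbatim back into the shadow game.

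Concretely, I would fix once and for all a choice function assigning to each $I$-positive set $X$ a set $c(X)\in D$ with $c(X)\subseteq_I X$; density of $D$ guarantees this, and writing $M_X=c(X)\setminus X\in I$ we get $c(X)\cap X=c(X)\setminus M_X$, which is a member of the target family $\{A\setminus M: A\in D, M\in I\}$, is $=_I c(X)$ and hence $I$-positive, and is genuinely contained in $X$. Denoting the shadow pairs by $(A_n,B_n)$ and the real pairs by $(A_n',B_n')$, I then define the new strategy $\sigma'$ inductively: at an even (Empty) stage, $\sigma'$ reconstructs the shadow history --- reading Nonempty's past moves off the real play and recomputing Empty's past shadow moves via $\sigma$ --- feeds it to $\sigma$ to obtain $(A_n,B_n)$, and instructs Empty to play $(A_n',B_n')=(c(A_n)\cap A_n,\,c(B_n)\cap B_n)$; at the following (odd) stage Nonempty plays some legal real move, which is copied into the shadow as Nonempty's response.

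Two verifications drive the argument. First, \emph{legality}: the shadow even moves are $\sigma$-outputs by fiat, and each shadow odd move, being Nonempty's real move, sits below the previous real Empty move, which sits below the previous shadow Empty move, so the shadow sequence is a $\subseteq$-decreasing chain of $I$-positive pairs, i.e.\ a legal $\sigma$-play; symmetrically each real Empty move $c(A_n)\cap A_n$ lies below $A_n$, which lies below the previous shadow move (equal to the previous real Nonempty move), so the real play is legal as well. Second, the \emph{winning transfer}: since real Empty coordinates are contained in the corresponding shadow coordinates and the odd coordinates coincide, the terminal real intersections $\bigcap_n A_n'$ and $\bigcap_n B_n'$ are contained in the shadow intersections $\bigcap_n A_n$ and $\bigcap_n B_n$; as $\sigma$ is winning, no pair $\alpha<\beta$ witnesses a Nonempty win in the shadow, so \emph{a fortiori} none does in the smaller real intersections, and $\sigma'$ is winning.

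The one genuinely load-bearing point --- and the step I would be most careful to get right --- is this monotonicity of the payoff: shrinking Empty's sets only deletes potential witnessing pairs for Nonempty, so passing from the shadow play to the pointwise-smaller real play can never convert an Empty win into an Empty loss. Everything else is bookkeeping, namely confirming that density supplies representatives of the exact syntactic form $A\setminus M$, and that the shadow history is recoverable from the real history (via the fixed choice function and $\sigma$), so that $\sigma'$ is a bona fide strategy rather than merely a rule on an auxiliary tree.
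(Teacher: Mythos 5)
Your proof is correct and follows essentially the same route as the paper: the paper fixes a map $\pi$ sending each positive set $B$ to the least $A\setminus M\subseteq B$ with $A\in D$, $M\in I$, and plays $\sigma'=\pi\circ\sigma$, transferring the win exactly by your ``shadow play'' observation that the modified play's intersections shrink. Your explicit representative $c(X)\cap X=c(X)\setminus(c(X)\setminus X)$ is just a concrete instance of the paper's $\pi$, so the two arguments coincide.
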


\begin{proof}
Let us show the nontrivial direction $(\rightarrow)$. Fix a winning strategy $\sigma$ of Player Empty. The input of $\sigma$ will be $(I^+\times I^+)^{<\omega}$, corresponding to the sequence of positive sets Player Nonempty has played so far.
Let $\pi: I^+ \to I^+$ be a map such that $\pi(B)=A \backslash M$ where $(A,M)\in D\times I$ is least (with respect to some fixed well ordering) such that $A\backslash M\subseteq B$. Such $\pi$ exists since $D$ is dense in $P(\kappa)/I$. Consider $\sigma'=\pi\circ \sigma$. Clearly, the range of $\sigma'$ is a subset of $\{A\backslash M: A\in D, M\in I\}$. To see that it is a winning strategy for Player Empty, suppose $\langle (A_n, B_n): n\in \omega\rangle$ is a play such that Player Empty plays according to $\sigma'$. Notice that $\langle (A'_n, B'_n): n\in \omega\rangle$, where $(A'_n, B'_n)=(A_n, B_n)$ when $n$ is odd and $(A'_n,B'_n)=\sigma(\langle (A_{2k-1}, B_{2k-1}): 2k-1<n\rangle)$ is a legal play where Player Empty is playing according to $\sigma$. As a result, there do not exist $\alpha<\beta$ such that $\alpha\in \bigcap_{n\in \omega} A_n' \subseteq \bigcap_{n\in \omega} A_n$ and $\beta\in \bigcap_{n\in \omega} B_n'\subseteq\bigcap_{n\in \omega} B_n$. Therefore, $\sigma'$ is a winning strategy for Player Empty.
\end{proof}

\begin{theorem}\label{theorem: 2preci}
If $\kappa$ carries a uniform normal $2$-precipitous ideal, then $\kappa\to_{hc}[\kappa]^2_{\omega, 2}$.
\end{theorem}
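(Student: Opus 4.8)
The plan is to fix a colouring $c:[\kappa]^2\to\omega$ and, for each $\alpha<\kappa$ and $i<\omega$, to record the colour slice $X_{\alpha,i}=\{\gamma<\kappa:c(\{\alpha,\gamma\})=i\}$. Since a normal ideal is $\sigma$-complete, the family $\{X_{\alpha,i}:i<\omega\}$ partitions $\kappa\setminus\{\alpha\}$ into countably many pieces, so along any $I$-positive descending sequence there is a well-defined ``limit colour'': for a sufficiently generic filter $G$ on $P(\kappa)/I$ there is a unique $i$ with $X_{\alpha,i}\in G$. I would then reduce the target $\kappa\to_{hc}[\kappa]^2_{\omega,2}$ to producing $H\in[\kappa]^\kappa$ and two colours $k_0,k_1$ such that for every $C\in[\kappa]^{<\kappa}$ and all $\alpha,\beta\in H\setminus C$ there is a common neighbour $\gamma\in H\setminus C$ with $c(\{\alpha,\gamma\}),c(\{\beta,\gamma\})\in\{k_0,k_1\}$; such length-$2$ connections witness high connectivity in the colour set $\{k_0,k_1\}$, and uniformity of $I$ guarantees that the positive sets out of which $H$ is carved have size $\kappa$ and survive the deletion of any $C$ of size $<\kappa$.

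Next I would unwind the game of Definition \ref{definition: 2-precip}. First note that $2$-precipitousness implies ordinary precipitousness: a winning strategy for Empty in the one-coordinate emptiness game could be run on the $A$-coordinate of $G_I$ (Nonempty's $A$-moves forming a legal one-coordinate play) to force $\bigcap_nA_n=\emptyset$, hence to win $G_I$. Thus the generic ultrapower $j:V\to M$ is well-founded with $\mathrm{crit}(j)=\kappa=[\mathrm{id}]_G$, and the extra strength of $G_I$ is exactly what lets one handle two generic points $\alpha<\beta$ at once, the two coordinates $A_n$ and $B_n$ approximating the generic ultrafilters attached to these two points. In $M$ the generic point $\kappa$ is adjacent to each ground-model $\delta$ in the unique colour $i$ with $X_{\delta,i}\in G$, so $\kappa$ acts as a uniform common neighbour; the need for \emph{two} colours $k_0,k_1$ arises because the ``$A$-point'' and the ``$B$-point'' may see a connecting vertex in different colours, so the two edges of a length-$2$ path may carry distinct colours from $\{k_0,k_1\}$.

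To turn this into a construction inside $V$ I would invoke Lemma \ref{lemma: denseequivalence} to restrict Empty to moves of the form $A\setminus M$ with $A\in D$ and $M\in I$, for a dense $D\subseteq P(\kappa)/I$ tailored to $c$ — concretely, $D$ consisting of positive sets on which the colour to each of a prescribed finite set of anchors is constant, so that every legal play has fully controlled colours. Arguing by contradiction, I would assume $\kappa\not\to[\kappa]^2_{\omega,2}$ and manufacture a strategy $\sigma$ for Empty with moves in this normalised form: at each turn Empty uses the assumed failure of high connectivity, together with $\sigma$-completeness, to pin down the limit colours on both coordinates and to excise the potential common neighbours of the prospective pair, attempting to ensure that no properly ordered pair survives in $\bigcap_nA_n\times\bigcap_nB_n$. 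Showing that $\sigma$ is genuinely winning would contradict Definition \ref{definition: 2-precip}; dually, the surviving good plays should, via a fusion/recursion ranging over all small sets $C$ and all pairs to be joined, furnish the set $H=H_0\cup H_1$ together with the colours $k_0,k_1$.

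The main obstacle I anticipate is global coherence: a single play of $G_I$ produces only one connectable pair, whereas high connectivity must hold simultaneously for every $C\in[\kappa]^{<\kappa}$ and every pair of surviving vertices. The crux is therefore to organise the recursion so that one set $H$ of full size $\kappa$ meets all these requirements at once, and to verify that deleting an arbitrary $C$ of size $<\kappa$ never destroys the relevant common neighbours — precisely where normality (keeping the diagonal intersections of the colour slices $I$-positive after removing $C$) and the pairing built into $G_I$ (delivering the two colours and the length-$2$ paths) must be combined. Establishing that exactly two colours suffice, rather than boundedly many, is the delicate quantitative point and is the reason the hypothesis is phrased through the two-coordinate game rather than through precipitousness alone; confirming that the failure of the partition relation really yields a winning strategy for Empty is the heart of the matter.
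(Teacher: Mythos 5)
There is a genuine gap, and you half-concede it yourself when you write that ``confirming that the failure of the partition relation really yields a winning strategy for Empty is the heart of the matter'': that heart is missing, and it is precisely the step your contradiction scheme cannot supply. The failure of $\kappa\to_{hc}[\kappa]^2_{\omega,2}$ quantifies over all $H\in[\kappa]^\kappa$, all colour pairs, and all $C\in[\kappa]^{<\kappa}$; it gives no move-by-move recipe for shrinking a given positive pair, so it does not ``manufacture'' a legal strategy $\sigma$ for Empty. The paper sidesteps this entirely by playing the game against a much more \emph{local} statement: the existence of an $(i,j)$-frequent pair $(B_0,B_1)$, meaning every positive subpair $(B_0',B_1')$ contains a pair of colour $i$ in one order and of colour $j$ in the other. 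The negation of \emph{that} statement is literally a shrinking rule, so Empty's strategy writes itself (at stage $2i$, shrink the current pair so that colour $i$ is omitted on $A'\otimes B'$), and Nonempty's win --- some $\alpha<\beta$ surviving in both intersections, with $c(\alpha,\beta)=k$ --- contradicts what the strategy arranged at stage $2k$. Your proposal contains no analogue of this extraction, nor of the second essential ingredient: the normality step (Claim \ref{claim: normality}), a diagonal-union argument upgrading frequency to ``$I^*$-almost all $\alpha\in B_0$ have $I^+$-many $i$-neighbours in every positive $B_1'\subseteq B_1$,'' which is what allows the final set $B_0^*\cup B_1^*$ to be assembled at all. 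Your vague appeal to ``keeping the diagonal intersections of the colour slices $I$-positive'' does not substitute for it.

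Your reduction target is also quantitatively wrong. You aim for a common neighbour $\gamma$ with both edges coloured in $\{k_0,k_1\}$, i.e.\ connections by paths of two edges. The hypothesis does not yield this: the paper's construction produces a bipartite-type set $B_0^*\cup B_1^*$ in which an $\alpha\in B_0^*$ and $\beta\in B_1^*$ are joined by a path $\alpha,\gamma,\eta,\beta$ of three edges with colours $i,j,j$ (and up to one more edge when both endpoints lie on the same side), and Section \ref{section: lengthPaths} shows that even the strictly stronger hypothesis of Theorem \ref{theorem: main} consistently fails to give paths of length $<3$. So insisting on length-$2$ connections builds an obstruction into your plan that the game machinery cannot overcome; the two colours enter not because ``the two edges of a length-$2$ path may carry distinct colours,'' but because the frequent pair is asymmetric --- colour $i$ governs pairs with the $B_0$-point below, colour $j$ pairs with the $B_1$-point below. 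Finally, your observation that $2$-precipitousness implies precipitousness is correct but idle here: the paper's proof of this theorem never forms a generic ultrapower, and your ultrapower heuristics (the ``limit colour'' of a generic point) do not feed into any completed argument.
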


\begin{proof}
Fix a uniform normal $2$-precipitous ideal $I$ on $\kappa$ and a coloring $c: [\kappa]^2\to \omega$. Given $A,B$ two sets of ordinals, we let $A\otimes B=\{(\alpha,\beta)\in A\times B: \alpha<\beta\}$. We say a pair of $I$-positive sets $(B_0,B_1)$ is \emph{$(i,j)$-frequent} if for any $I$-positive sets $B_0'\subseteq B_0$, $B_1'\subset B_1$, there are
	\begin{itemize}
	\item $\alpha<\beta$ with $\alpha\in B_0', \beta\in B_1'$ such that $c(\alpha,\beta)=i$ and 
	\item $\beta'<\alpha'$ with $\beta'\in B_1'$, $\alpha'\in B_0'$ such that $c(\beta',\alpha')=j$.
	\end{itemize}
	\begin{claim}
	There exists a pair of $I$-positive sets $(B_0, B_1)$ and $i,j\in \omega$ such that $(B_0, B_1)$ is $(i,j)$-frequent.
	\end{claim}
	\begin{proof}[Proof of the Claim]
	
	Starting with a positive pair $(A_0, A_1)$, we find some $i\in \omega$ and positive $(C_0,C_1)\subseteq (A_0, A_1)$ such that $(C_0, C_1)$ satisfies the first requirement of the $(i,j)$-frequency, namely, for all positive sets $C_0'\subseteq C_0, C_1'\subseteq C_1$ there are $(\alpha,\beta)\in C_0'\otimes C_1'$ such that $c(\alpha,\beta)=i$. Suppose for the sake of contradiction that such $(C_0,C_1)$ and $i$ do not exist. We define a strategy $\sigma$ for Player Empty:  they start by playing $(A^0, B^0)=_{def}(A_0,A_1)$. At stage $2i$, denoting the game played so far is $\langle (A^k, B^k): k<2i\rangle$, by the hypothesis, there are positive $(A',B')\subseteq(A^{2i-1}, B^{2i-1})$ such that no $(\alpha,\beta) \in A'\otimes B'$ satisfies $c(\alpha,\beta)=i$. Player Empty then plays $(A^{2i}, B^{2i})=(A',B')$. Since by the hypothesis of $I$, Player Empty does not have a winning strategy, there is a play $\langle (A^n,B^n): n\in \omega\rangle$ where Player Empty plays according to the strategy $\sigma$ but in the end, there are $(\alpha,\beta) \in \bigcap_{n\in \omega} A^n \otimes \bigcap_{n\in \omega} B^n$. However, if $c(\alpha,\beta)=k$, then at stage $2k$, the strategy of Empty makes sure $c'' A^{2k}\otimes B^{2k}$ omits $\{k\}$, which is a contradiction.
	
Finally, we repeat the previous argument with input $(C_1, C_0)$ in place of $(A_0, A_1)$ to find positive $(B_1, B_0)\subseteq (C_1, C_0)$ and $j\in \omega$ satisfying the second condition of the $(i,j)$-frequency, as desired.
	\end{proof}		
	Fix an $(i,j)$-frequent pair $(B_0,B_1)$. We strengthen this property of $(B_0, B_1)$ by using the normality of $I$. Recall that for any positive $S\in I^+$, $I^*\restriction S$ denotes the dual filter of $I$ restricted to $S$.
	\begin{claim}\label{claim: normality}
	For any $I$-positive $B_0'\subseteq B_0, B_1'\subseteq B_1$, 
	\begin{itemize}
	\item $\{\alpha\in B_0: \{\beta\in B_1': c(\alpha,\beta)=i\}\in I^+\}\in I^*\restriction B_0$,
	\item $\{\beta'\in B_1: \{\alpha'\in B_0': c(\beta',\alpha')=j\}\in I^+\}\in I^*\restriction B_1$.
	\end{itemize}
	\end{claim}
	\begin{proof}[Proof of the claim]
	Let us just show the first part; the proof of the second part is identical. Suppose for the sake of contradiction that $B^0=_{def}\{\alpha\in B_0: B^1_{\alpha}=_{def}\{\beta\in B_1': c(\alpha,\beta)=i\}\in I\}\in I^+$. Since $I$ is normal, $B^1=\bigtriangledown_{\alpha\in B^0} B^1_\alpha\in I$. Applying the assumption that $(B_0, B_1)$ is $(i,j)$-frequent to $B^0$ and $B_1'\backslash B^1$, we get $(\alpha,\beta)\in B^0 \otimes (B_1' \backslash B^1)$ such that $c(\alpha,\beta)=i$. However, by the definition of $B^1$, $\beta\in B^1$, which is a contradiction.
	\end{proof}
	
Applying Claim \ref{claim: normality}, we find $B_0^*\in I^*\restriction B_0, B_1^*\in I^*\restriction B_1$ such that 
\begin{enumerate}
\item for any $\alpha\in B^*_0$, $\{\beta\in B^*_1: c(\alpha,\beta)=i\}\in I^+$ and 
\item for any $\beta'\in B^*_1$, $\{\alpha'\in B^*_0: c(\beta',\alpha')=j\}\in I^+$.
\end{enumerate}
Let us check that $(B_0^* \cup B^*_1, c^{-1}(\{i,j\}))$ is a highly connected subgraph of size $\aleph_2$. Given $C\in [B_0^* \cup B^*_1]^{\leq \aleph_1}$, $\alpha,\beta\in B_0^* \cup B^*_1 \backslash C$, we need to find an $(i,j)$-valued path connecting them in $B_0^* \cup B^*_1 \backslash C$. Consider the following cases.
\begin{itemize}
\item $\alpha\in B^*_0, \beta\in B^*_1$: let $A_\alpha =\{\gamma\in B^*_1: c(\alpha,\gamma)=i\}\in I^+$ and let $B_\beta=\{\eta\in B^*_0: c(\beta, \eta)=j\}\in I^+$. Since $(B^*_0, B^*_1) $ is $(i,j)$-frequent, we can find $(\gamma,\eta)\in (A_\alpha \backslash C)\otimes (B_\beta \backslash C)$ such that $c(\gamma,\eta)=j$. Then the path $\alpha, \gamma, \eta, \beta$ is as desired.
\item $\alpha, \beta\in B^*_0$ or $\alpha, \beta\in B^*_1$: we can reduce to the previous case by moving either $\alpha$ or $\beta$ to the other side using an edge of $c$-color either $i$ or $j$.
\end{itemize}
\end{proof}

\section{The consistency of the existence of a 2-precipitous ideal}\label{section: consistency2-precip}

In this section we discuss two forcing constructions of a 2-precipitous ideal on $\kappa$. The first is cardinal preserving and the second involves collapsing cardinals.
First let us record some characterizations of 2-precipitous ideals analogous to those of precipitous ideals in \cite{JechPrikry}.

\begin{definition}
\emph{A tree $T$ of maximal antichains} of $P(\kappa)/I \times P(\kappa)/I$ is a sequence of maximal antichains $\langle \mathcal{A}_n: n\in \omega\rangle$ of $P(\kappa)/I \times P(\kappa)/I$ such that $\mathcal{A}_{n+1}$ refines $\mathcal{A}_n$ for each $n\in \omega$. A branch through $T$ is a decreasing sequence of conditions $\langle b_n: n\in\omega\rangle$ such that $b_n\in \mathcal{A}_n$.
\end{definition}

The proof by Jech and Prikry \cite{JechPrikry} (see also \cite[Proposition 2.7]{ForemanHandbook}) essentially gives the following.

\begin{theorem}[\cite{JechPrikry}]\label{theorem: JechPrikry}
$I$ is 2-precipitous if for any pair of positive sets $(C_0, C_1)$ and a tree $T$ of maximal antichains $\langle \mathcal{A}_n : n\in \omega\rangle$ below $(C_0,C_1)$, there exists a sequence $\langle (A_n,B_n): n\in \omega\rangle$ such that 
	\begin{enumerate}
	\item $\langle (A_n, B_n): n\in \omega\rangle$ is a branch through the tree $T$, and 
	\item there exist $\alpha<\beta$ such that $\alpha\in \bigcap_{n\in \omega} A_n$ and $\beta\in \bigcap_{n\in \omega} B_n$.
	\end{enumerate}
\end{theorem}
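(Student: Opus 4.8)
The plan is to mimic the classical Jech--Prikry characterization of precipitous ideals, adapting it to the two-coordinate game $G_I$. The statement asserts a sufficient condition for $2$-precipitousness: given that every tree of maximal antichains below any positive pair admits a branch whose coordinatewise intersections contain an appropriate increasing pair $\alpha<\beta$, we must conclude that Player Empty has no winning strategy in $G_I$. So I would argue by contraposition: assume Player Empty \emph{does} have a winning strategy $\sigma$, and from it manufacture a positive pair $(C_0,C_1)$ together with a tree $T=\langle \mathcal{A}_n:n\in\omega\rangle$ of maximal antichains below it, such that \emph{no} branch through $T$ has the desired $\alpha<\beta$ property. This directly contradicts the hypothesis, forcing the conclusion that no winning strategy exists.

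The heart of the construction is to build the antichains so that every branch corresponds to a play of $G_I$ consistent with $\sigma$. First I would invoke Lemma~\ref{lemma: denseequivalence} to assume $\sigma$ has range inside $\{A\setminus M: A\in D, M\in I\}$ for a convenient dense $D$, which keeps the bookkeeping clean. Let $(C_0,C_1)$ be Player Empty's opening move $\sigma(\emptyset)$. I would then define $\mathcal{A}_n$ recursively: $\mathcal{A}_0=\{(C_0,C_1)\}$, and given a node $(A,B)\in\mathcal{A}_n$ reached by a $\sigma$-play, I take a maximal antichain of positive pairs $(A',B')\subseteq(A,B)$ below it (these are Nonempty's legal responses), and then append Empty's $\sigma$-dictated reply to each, forming the next level $\mathcal{A}_{n+1}$. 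The refinement condition holds because each pair at level $n+1$ extends a unique pair at level $n$; maximality at each level is arranged by choosing the Nonempty-responses to be a maximal antichain below the relevant node and using that Empty's move only shrinks further. The key point is that any branch $\langle (A_n,B_n):n\in\omega\rangle$ through $T$ is, by design, exactly a run of the game in which Empty follows $\sigma$.

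Because $\sigma$ is winning for Empty, every such play is a loss for Nonempty, meaning there do \emph{not} exist $\alpha<\beta$ with $\alpha\in\bigcap_n A_n$ and $\beta\in\bigcap_n B_n$. Thus $T$ witnesses the failure of the hypothesis below $(C_0,C_1)$, completing the contrapositive and hence the theorem.

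The main obstacle I anticipate is the precise interleaving of the two players' moves within a single level of the tree. In $G_I$, Empty and Nonempty alternate, so a single edge of the tree of antichains should encode \emph{two} half-moves (a Nonempty response followed by Empty's $\sigma$-reply), and I must ensure that the resulting $\mathcal{A}_{n+1}$ is genuinely a maximal antichain refining $\mathcal{A}_n$ in $P(\kappa)/I\times P(\kappa)/I$, not merely a collection of pairs. The subtlety is that Empty's reply $\sigma(\cdots)$ need not itself be a maximal choice, so maximality of $\mathcal{A}_{n+1}$ has to be derived from the maximality of the Nonempty-layer together with the fact that $\sigma$ produces a genuine extension in $I^+\times I^+$; one checks that any positive pair below some node of $\mathcal{A}_n$ is compatible with some pair in $\mathcal{A}_{n+1}$, using that Empty's move only passes to a subset. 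I would also need to confirm that this is precisely the reduction Jech and Prikry carry out in the one-coordinate case, so the citation is justified, and note that the converse direction (necessity), while true, is not what the \emph{``if''} in the statement requires.
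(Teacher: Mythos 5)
Your overall plan --- contraposition: from a winning strategy $\sigma$ for Empty, build a tree of maximal antichains whose branches are exactly the $\sigma$-plays, so that no branch can have the $\alpha<\beta$ property --- is the right one, and it is precisely the Jech--Prikry reduction that the paper invokes by citation (the paper gives no in-text proof of Theorem~\ref{theorem: JechPrikry}). However, your construction of the levels has a genuine gap at exactly the step you flag. If you first fix a maximal antichain $\{(A'_k,B'_k)\}$ of Nonempty responses below a node $(A,B)$ and then replace each $(A'_k,B'_k)$ by Empty's reply $\sigma(\cdots{}^\frown(A'_k,B'_k))$, the resulting family is in general \emph{not} maximal below $(A,B)$: a positive pair $(P,Q)\leq (A'_k,B'_k)$ need not be compatible with the $\sigma$-reply, which is merely \emph{some} positive refinement of $(A'_k,B'_k)$. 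Concretely, split $A'_k$ into two disjoint positive pieces; $\sigma$ may answer inside one piece while $(P,Q)$ refines the other. So your proposed verification that ``any positive pair below some node of $\mathcal{A}_n$ is compatible with some pair in $\mathcal{A}_{n+1}$, using that Empty's move only passes to a subset'' does not go through --- passing to a subset is what \emph{destroys} maximality here; it cannot be what rescues it.

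The repair is standard and is the actual content of the Jech--Prikry argument. For a node $(A,B)$ carrying a partial play $p$ in which Empty has followed $\sigma$, consider the family $E_{(A,B)}=\{\sigma(p^\frown(P,Q)) : (P,Q)\leq (A,B) \text{ positive}\}$ of \emph{all} $\sigma$-replies to \emph{all} legal Nonempty moves, not just those in a pre-chosen antichain. This family is dense below $(A,B)$, since for any positive $(P,Q)\leq (A,B)$ the reply $\sigma(p^\frown(P,Q))$ lies below $(P,Q)$. Hence you may choose a maximal antichain below $(A,B)$ \emph{contained in} $E_{(A,B)}$, recording for each element the Nonempty move (and hence the partial play) that produced it; the union of these antichains over the nodes at level $n$ is then a genuine maximal antichain refining $\mathcal{A}_n$, and every branch, read together with the recorded Nonempty moves, is literally a run of $G_I$ in which Empty follows $\sigma$. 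The branch intersections agree with the intersections along the full play because the $\sigma$-replies interleave below the corresponding Nonempty moves. With this change the rest of your argument is correct: since $\sigma$ is winning, no such play admits $\alpha<\beta$ with $\alpha\in\bigcap_n A_n$ and $\beta\in\bigcap_n B_n$, so the tree below $(C_0,C_1)=\sigma(\emptyset)$ violates the hypothesis, completing the contrapositive. (Your appeal to Lemma~\ref{lemma: denseequivalence} is harmless but not needed, and you are right that only the sufficiency direction is asserted.)
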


\begin{remark}\label{remark: wlogdense}
Suppose we are given a dense subset $D\subseteq P(\kappa)/I$. By Lemma \ref{lemma: denseequivalence}, it is no loss of generality to assume $ \{(C_0,C_1)\}  \cup \bigcup_{n\in \omega}\mathcal{A}_n\subseteq \{A\backslash M: A\in D, M\in I\}\times \{A\backslash M: A\in D, M\in I\}$.
\end{remark}

For the rest of this section, we will apply Remark \ref{remark: wlogdense} liberally. Also it turns out that suppressing the quotiented ideal $I$ does not affect the reasoning. Therefore, to avoid cumbersome notations, we will further assume that $\mathcal{A}_n\subseteq D\times D$ for all $n\in \omega$. Given a partial order $\mathbb{P}$, we denote the complete Boolean algebra generated by $\mathbb{P}$ as $\mathbb{B}(\mathbb{P})$.

\begin{proposition}
If $I$ is a $\kappa$-complete normal ideal on $\kappa$ and $P(\kappa)/I\simeq \mathbb{B}(Add(\omega,\lambda))$ for some $\lambda$, then $I$ is 2-precipitous.
\end{proposition}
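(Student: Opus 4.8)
The plan is to apply the characterization in Theorem \ref{theorem: JechPrikry}, so I must verify its hypothesis: given any positive pair $(C_0, C_1)$ and any tree of maximal antichains $\langle \mathcal{A}_n : n \in \omega \rangle$ below it, I need to produce a branch $\langle (A_n, B_n) : n \in \omega \rangle$ together with ordinals $\alpha < \beta$ with $\alpha \in \bigcap_n A_n$ and $\beta \in \bigcap_n B_n$. By Remark \ref{remark: wlogdense}, I may use the isomorphism $P(\kappa)/I \simeq \mathbb{B}(\mathrm{Add}(\omega,\lambda))$ to take the fixed dense set $D$ to be (the image of) $\mathrm{Add}(\omega,\lambda)$ itself, so every antichain element is essentially a finite partial function $p : \lambda \to 2$, and a refining sequence of maximal antichains corresponds to a tree of such conditions where each level is predense below the previous.

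The key idea is to pass to the forcing extension by $\mathbb{B}(\mathrm{Add}(\omega,\lambda))^2 \cong \mathbb{B}(\mathrm{Add}(\omega, \lambda \sqcup \lambda))$, which since $I$ is the dual of a normal ideal via this isomorphism gives rise to a generic elementary embedding. First I would force with $\mathrm{Add}(\omega,\lambda) \times \mathrm{Add}(\omega,\lambda)$ over $V$, obtaining a $V$-generic pair $(G_0, G_1)$ below the condition coding $(C_0, C_1)$. Because the quotient $P(\kappa)/I$ is this Cohen algebra and $I$ is normal and $\kappa$-complete, each coordinate generic $G_\ell$ induces a generic ultrafilter $U_\ell$ on $P(\kappa)/I$ extending $\{C_\ell\}$, and hence a generic elementary embedding $j_\ell : V \to M_\ell \subseteq V[G_\ell]$ with critical point $\kappa$. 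In the product extension $V[G_0][G_1]$ I have two such embeddings $j_0, j_1$ simultaneously, coming from mutually generic Cohen generics. The maximal antichains $\mathcal{A}_n$, being elements of $V$, are each met by the generic filter in a single condition $(A_n, B_n)$ — this is exactly what genericity of $(G_0,G_1)$ for the maximal antichain $\mathcal{A}_n$ provides — and the refinement condition makes $\langle (A_n, B_n) : n \in \omega\rangle$ a branch through $T$. It remains to extract the witnessing ordinals $\alpha < \beta$.

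To produce $\alpha$ and $\beta$ I would read off the generic objects: the generic ultrafilter $U_0$ on the first coordinate gives an ordinal $\alpha = [\mathrm{id}]_{U_0} = \mathrm{crit}(j_0)$-style representative, namely a $\kappa < \alpha$ in $M_0$ lying in $j_0$ of each $A_n$; concretely, since each $A_n \in U_0$ by the branch condition, normality (precipitousness in the single-coordinate sense, which holds because a Cohen algebra quotient yields a precipitous ideal) guarantees $\bigcap_n A_n$ is represented by a genuine ordinal $\alpha$ in the generic ultrapower, and symmetrically $\bigcap_n B_n$ is represented by an ordinal $\beta$ via $U_1$. The mutual genericity of the two Cohen factors is what lets me arrange $\alpha \neq \beta$ and, after possibly swapping, $\alpha < \beta$; the uniformity of $I$ ensures the generic ordinals lie above any bounded set, so comparability and strict inequality can be secured. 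Pulling back through the forcing via the maximality (Boolean) theorem, the statement ``there is a branch with witnessing $\alpha < \beta$'' has Boolean value $1$ below $(C_0, C_1)$ in $\mathbb{B}(\mathrm{Add}(\omega,\lambda))^2$, and since this is a statement about objects in $V$, it must already hold in $V$.

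The main obstacle I anticipate is the extraction of two \emph{distinct comparable} ordinals $\alpha$ and $\beta$ from the two generic ultrapowers in a way that is forced to exist back in $V$: a single generic ultrafilter easily yields one diagonal ordinal per coordinate, but I must ensure the pair $(\alpha, \beta)$ with $\alpha < \beta$ genuinely witnesses membership in $\bigcap_n A_n$ and $\bigcap_n B_n$ simultaneously, which requires that the two coordinates be mutually generic (so that the product $\mathrm{Add}(\omega,\lambda)\times \mathrm{Add}(\omega,\lambda) \cong \mathrm{Add}(\omega, 2\lambda)$ really does yield two independent generic embeddings rather than one degenerate diagonal). Making precise the claim that these diagonal ordinals exist and are distinct — rather than collapsing to the same representative — is exactly the point where the Cohen (homogeneity and mutual genericity) structure of $\mathrm{Add}(\omega,\lambda)$ must be used, and I expect the bulk of the careful argument to live there.
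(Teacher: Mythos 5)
There is a genuine gap, and it sits exactly where you flagged it: the extraction of two \emph{distinct comparable} witnesses $\alpha<\beta$ from a \emph{product} of generics cannot work as described. If $(G_0,G_1)$ is generic for $(P(\kappa)/I)\times(P(\kappa)/I)$, each coordinate yields a generic embedding $j_\ell\colon V\to M_\ell$ with critical point $\kappa$, and since $I$ is normal the ``diagonal'' ordinal $[\mathrm{id}]_{U_\ell}$ is precisely $\kappa$ in each ultrapower. So both of your candidate ordinals are the \emph{same} ordinal $\kappa$; no amount of homogeneity or mutual genericity of the Cohen factors changes the critical point, and ``after possibly swapping, $\alpha<\beta$'' cannot be secured. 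Worse, the two membership facts you do get, $\kappa\in j_0(A_n)$ (for $A_n\in U_0$) and $\kappa\in j_1(B_n)$ (for $B_n\in U_1$), live in two different ultrapowers $M_0$ and $M_1$, and there is no single elementary embedding through which the conjunction ``there is a branch through $T$ with witnesses $\alpha<\beta$'' can be reflected to $V$. Note also that $\alpha\in\bigcap_n A_n$ cannot be witnessed literally in $V[G_0][G_1]$: a generic ultrafilter is not countably complete, and $\bigcap_n A_n$ may well be empty even in the extension; the ultrapower trick only gives $\kappa\in j_0(A_n)$, not $\kappa\in A_n$. Your closing move (``Boolean value $1$, hence true in $V$'') is likewise unjustified as stated, since the branch is added by the forcing; pulling the statement back requires either elementarity of an embedding of $V$ into a well-founded model containing the branch, or an absoluteness-of-well-foundedness argument applied to the trees $S_{\alpha,\beta}$ of finite partial branches with fixed witnesses --- but that presupposes the witnesses, which your construction never produces.

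The paper's proof repairs exactly this by \emph{iterating} rather than taking a product: force $G\subseteq P(\kappa)/I$ with $C_0\in G$ to get $j\colon V\to M$, then force $H\subseteq j(P(\kappa)/I)$ \emph{over $V[G]$} (a generic for the image forcing on $j(\kappa)$, not a second copy of the $V$-forcing) to get $k\colon M\to N$ with critical point $j(\kappa)$. The witnessing pair is then $(\kappa, j(\kappa))$ with $\kappa<j(\kappa)$, $\kappa=k(\kappa)\in k(j(A_n))$ and $j(\kappa)\in k(j(B_n))$, and the branch with these witnesses lies in the well-founded $N$ (using ${}^\omega M\subseteq M$ in $V[G]$), whence elementarity of $k\circ j$ gives the branch in $V$. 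The price of the iteration is the paper's key Claim: one must show that the slice antichains $T'_n=\{B^*:\exists (A^*,B^*)\in j(\mathcal{A}_n),\ \kappa\in A^*\}$ are \emph{maximal} below $j(C_1)$ in $j(P(\kappa)/I)$, so that $H$ actually meets them; this is where the specific Cohen structure enters --- c.c.c.\ makes each $\mathcal{A}_n$ countable, finiteness of conditions gives $j(p)=j''p$, and any counterexample condition can be assumed supported in $j''\lambda$ and pulled back via $j^{-1}$ to contradict maximality in $V[G]$. Your sketch invokes the Cohen structure but for the wrong purpose (distinguishing the diagonal ordinals, which it cannot do) and never addresses this maximality-of-image-antichains issue, which is the actual technical heart of the argument.
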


\begin{proof}
Let $\pi: \mathbb{B}(Add(\omega,\lambda))\to P(\kappa)/I$ be an isomorphism. For each $r\in Add(\omega,\lambda)$, let $X_r = \pi(r)$. Here we identify $Add(\omega,\lambda)$ as a dense subset of $\mathbb{B}(Add(\omega,\lambda))$, $D=\{X_r: r\in Add(\omega,\lambda)\}$.

Suppose for the sake of contradiction that $I$ is not 2-precipitous. By Theorem \ref{theorem: JechPrikry}, there exist a $(C_0,C_1)$ and a tree $T$ of maximal antichains below $(C_0,C_1)$ for which conditions (1) and (2) simultaneously hold. Note that since $P(\kappa)/I\times P(\kappa)/I$ is c.c.c, each $\mathcal{A}_n \subseteq D\times D$ is countable. Find $r_0,r_1\in Add(\omega,\lambda)$ such that $C_i=X_{r_i}$ for $i<2$.

 Let $G\subseteq P(\kappa)/I$ be a generic filter containing $C_0$. Then in $V[G]$, there is a generic elementary embedding $j: V\to M$ which can be taken to be the ultrapower embedding with respect to the added generic $V$-ultrafilter extending the dual filter of $I$.

Consider $T'_n=\{B^*: \exists (A^*,B^*)\in j(\mathcal{A}_n), \kappa\in A^*\}$. Note that $\langle T'_n: n\in \omega\rangle\in M$ since $V[G]\models {}^{\omega}M\subseteq M$ by \cite[Proposition 2.14]{ForemanHandbook}. Note that $T'_n\subseteq j''V$. This follows from the fact that each $\mathcal{A}_n$ is countable, hence $j(\mathcal{A}_n)=j'' \mathcal{A}_n$. 
\begin{claim}\label{claim: maximal}
In $M$, $T'_n$ is a maximal antichain below $j(C_1)$ for the poset $j(P(\kappa)/I)$. 
\end{claim}
\begin{proof}[Proof of the claim]
Suppose not. By the product lemma, $\mathcal{B}=\{B: \exists (A,B)\in \mathcal{A}_n, A\in G\}$ is a maximal antichain for $$(P(\kappa)/I)^V\simeq \mathbb{B}(Add(\omega,\lambda))$$ below $X_{r_1}$ in $V[G]$. We can enumerate $$\mathcal{B}=\langle X_{p_n}: n\in \omega, r_n\in Add(\omega,\lambda)\rangle.$$ In particular, $\langle p_n: n\in \omega\rangle$ is a maximal antichain for $Add(\omega,\lambda)$ below $r_1$ in $V[G]$.

 If $\langle j(X_{p_n}): n\in\omega\rangle$ is not a maximal antichain in $j(P(\kappa)/I)\simeq j(\mathbb{B}(Add(\omega,\lambda)))$, then there exists a condition $r\in Add(\omega, j(\lambda))$ such that $X_r^*=_{\mathrm{def}}j(\pi)(r)$ is incompatible with any condition in the set $\{j(X_{p_n}): n\in\omega\}$. This means $r$ is incompatible with any condition in $\{j(p_n): n\in \omega\}$. Since $j(p_n)=j''p_n$, we may assume $r\in Add(\omega,j''\lambda)$. Let $r^*=j^{-1}(r)$. Then $r^*\in Add(\omega,\lambda)/r_1$ is incompatible with any condition in $\{p_n: n\in \omega\}$. This contradicts with the fact that $\langle p_n: n\in \omega\rangle$ is a maximal antichain subset of $Add(\omega,\lambda)$ below $r_1$ in $V[G]$.
\end{proof}
Let $H\subseteq j(P(\kappa)/I)$ be a generic filter over $V[G]$ containing $j(D)$. Since $M\models j(P(\kappa)/I)$ is a $j(\kappa)$-complete and $\aleph_1$-saturated ideal, $H$ gives rise to an ultrapower embedding $k: M\to N$ with critical point $j(\kappa)$. Consider $b=\{(A_n,B_n): (\kappa,j(\kappa))\in k(j(A_n))\times k(j(B_n))\}$. By Claim \ref{claim: maximal}, $H$ meets $T'_n$ for all $n\in \omega$. As a result, $k\circ j(b)=k\circ j''b$ is a branch $\langle (A^*_n, B^*_n): n\in \omega\rangle$ through $k(j(T))$ in $V[G*H]$ with $(\kappa,j(\kappa))\in \bigcap_{n\in \omega} A^*_n \otimes \bigcup_{n\in \omega} B^*_n$. Since $N$ is well-founded, there is such a branch in $N$. By the elementarity of $k\circ j$, $T$ has a branch $\langle (A_n, B_n): n\in \omega\rangle$ in $V$ for which there are $\alpha<\beta$ with $\alpha\in \bigcap_{n\in \omega} A_n$ and $\beta\in \bigcap_{n\in \omega} B_n$.
\end{proof}

It is easy to see that if $P(\kappa)/I$ has a $\sigma$-closed dense subset, then $I$ is 2-precipitous. However, in this case, it is necessary that $2^{\aleph_0} < \kappa$.

The second construction gives a scenario where $\kappa$ is a small uncountable cardinal (like $\aleph_2$) and $\mathrm{CH}$ fails. In particular, such an ideal can be constructed using the Mitchell collapse \cite{Mitchell}.\footnote{We thank Spencer Unger for his suggestion on the relevance of the Mitchell collapse.}

Recall the Mitchell forcing from \cite{Mitchell} (the representation of the forcing here is due to Abraham, see \cite{Abraham} and \cite[Section 23]{CummingsSurvey}) $\mathbb{M}(\omega,\lambda)$ consists of conditions of the form $(p,r)$ where $p\in Add(\omega,\lambda)$ and $r$ is a function on $\lambda$ of countable support such that for any $\alpha<\lambda$, $\Vdash_{Add(\omega,\alpha)} r(\alpha)$ is a condition in $Add(\omega_1,1)$. The order is that $(p_2, r_2)\leq (p_1, r_1)$ iff $p_2\supset p_1$, $supp(r_2)\supset supp(r_1)$, and for any $\alpha\in supp(r_1)$, $p_2\restriction \alpha\Vdash_{Add(\omega,\alpha)} r_2(\alpha)\leq_{Add(\omega_1,1)} r_1(\alpha)$. 

Define $R$ to be the poset consisting of countably supported functions $r$ with domain $\lambda$ such that for each $\alpha\in supp(r)$, $r(\alpha)$ is an $Add(\omega, \alpha)$-name for a condition in $Add(\omega_1, 1)$. The order of $R$ is the following: $r_2\leq_R r_1$ iff $supp(r_2)\supset supp(r_1)$ and for any $\alpha\in supp(r_1)$, $\Vdash_{Add(\omega,\alpha)} r_2(\alpha)\leq_{Add(\omega_1, 1)} r_1(\alpha)$.

The following are standard facts about this forcing (see \cite{CummingsSurvey}): 
\begin{enumerate}
\item $\mathbb{M}(\omega,\lambda)$ projects onto $Add(\omega,\lambda)$ by projecting onto the first coordinate,
\item $Add(\omega,\lambda)\times R$ projects onto $\mathbb{M}(\omega,\lambda)$ by the identity map.
\end{enumerate}

\begin{remark}
Whenever $(p_2, r_2)\leq_{\mathbb{M}(\omega,\lambda)}(p_1, r_1)$, there exists $r_2'\in R$ with $dom(r_2')=dom(r_2)$ such that $r_2'\leq_{R} r_1$ and $p_2\restriction \alpha\Vdash_{Add(\omega,\alpha)} r_2(\alpha)=r_2'(\alpha)$ for any $\alpha\in dom(r_2)$. In other words, $(p_2, r_2)$ and $(p_2, r_2')$ are equivalent conditions. We will use this fact freely in the following proofs.
\end{remark}

Note that the poset $R$ has the property that any countable decreasing sequence has a greatest lower bound.

\begin{proposition}\label{proposition: mitchell}
If $P(\kappa)/I\simeq \mathbb{B}(\mathbb{M}(\omega,\lambda))$ for some $\lambda$, then $I$ is 2-precipitous. 
\end{proposition}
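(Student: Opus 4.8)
The plan is to mimic the structure of the preceding proposition (the $Add(\omega,\lambda)$ case), replacing the ccc analysis with one that exploits the factorization of Mitchell forcing. The goal is to verify the Jech--Prikry criterion (Theorem~\ref{theorem: JechPrikry}): given a pair $(C_0,C_1)$ and a tree $T=\langle \mathcal{A}_n:n\in\omega\rangle$ of maximal antichains below it, I must produce a branch $\langle(A_n,B_n):n\in\omega\rangle$ together with $\alpha<\beta$ in $\bigcap_n A_n$ and $\bigcap_n B_n$ respectively. As in the previous proof, I fix an isomorphism $\pi:\mathbb{B}(\mathbb{M}(\omega,\lambda))\to P(\kappa)/I$, take a generic filter $G$ containing $C_0$, and obtain the generic ultrapower $j:V\to M$ with critical point $\kappa$ and ${}^\omega M\subseteq M$ in $V[G]$. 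I then want to push the tree $T$ up via $j$, find a branch in a further generic extension, and pull it back by elementarity.

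\medskip

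\emph{The crucial structural difference} from the $Add(\omega,\lambda)$ case is that $\mathbb{M}(\omega,\lambda)$ is not ccc, so the antichains $\mathcal{A}_n\subseteq D\times D$ need not be countable, and the clean identity $j(\mathcal{A}_n)=j''\mathcal{A}_n$ fails. This is where I expect the main obstacle to lie. The plan to circumvent it is to use the two projections recorded before the statement: $\mathbb{M}(\omega,\lambda)$ projects onto $Add(\omega,\lambda)$, and $Add(\omega,\lambda)\times R$ projects onto $\mathbb{M}(\omega,\lambda)$. I would factor the generic $G$ through its Cohen part $G_c\subseteq Add(\omega,\lambda)$ and exploit that the residue $R$ (interpreted over $V[G_c]$) is $\sigma$-closed (indeed countable decreasing sequences in $R$ have greatest lower bounds). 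The idea is that the $\sigma$-closed part handles the countable tree of antichains: once the Cohen coordinate is fixed generically, I can meet the images of all $\mathcal{A}_n$ using genericity for the remaining forcing, building the branch coordinate by coordinate through the $\omega$ levels while the $\sigma$-closure lets me take a lower bound at the end.

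\medskip

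\emph{Concretely, the key steps in order are as follows.} First, after fixing $G\ni C_0$ and forming $j:V\to M$, I analyze the image of each antichain: I want to show that, working in $M$, the set $T'_n=\{B^*:\exists(A^*,B^*)\in j(\mathcal{A}_n),\ \kappa\in A^*\}$ is a maximal antichain below $j(C_1)$ for $j(P(\kappa)/I)$. The proof of maximality should run parallel to Claim~\ref{claim: maximal}, but now the argument must be routed through the Cohen/residue decomposition: using the product lemma I identify the relevant maximal antichain $\mathcal{B}$ below $C_1$ determined by $G$, transfer it under $j$, and argue that any hypothetical refuting condition $r$ in $j(\mathbb{M}(\omega,j(\lambda)))$ can be taken to live on $j''\lambda$, so that $j^{-1}(r)$ contradicts maximality in $V[G]$. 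Here the $\sigma$-closure of $R$ is what guarantees the countably many levels can be decided. Second, I take a further generic $H\subseteq j(P(\kappa)/I)$ over $V[G]$ with $j(D)\in H$, yielding $k:M\to N$ with critical point $j(\kappa)$, and I set $b=\{(A_n,B_n):(\kappa,j(\kappa))\in k(j(A_n))\times k(j(B_n))\}$. Third, since $H$ meets each maximal antichain $T'_n$, the sequence $k\circ j\,{}''b$ is a branch through $k(j(T))$ in $N$ witnessing $(\kappa,j(\kappa))\in\bigcap_n A^*_n\otimes\bigcap_n B^*_n$; well-foundedness of $N$ gives such a branch inside $N$, and elementarity of $k\circ j$ reflects it down to the desired branch of $T$ in $V$ with the required $\alpha<\beta$. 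The delicate point throughout — and the step I'd budget the most care for — is controlling the images of the (possibly uncountable) antichains $\mathcal{A}_n$ under $j$, which is precisely where the Mitchell forcing's factorization and the $\sigma$-closure of $R$ must do the work that countability did in the ccc case.
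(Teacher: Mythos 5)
Your diagnosis of where the difficulty sits is accurate, and your outer frame (two-step generic ultrapower $j$ then $k$, branch selected by $(\kappa,j(\kappa))$, pulled back by elementarity and well-foundedness) matches the paper's. But the mechanism you propose for the crucial step does not work, and this is a genuine gap. You attempt to verify the sufficient condition of Theorem~\ref{theorem: JechPrikry} for an \emph{arbitrary} tree of maximal antichains, and your maximality argument for $T'_n$ hinges on the assertion that a hypothetical refuting condition in $j(\mathbb{M}(\omega,\lambda))$ ``can be taken to live on $j''\lambda$'' and then pulled back by $j^{-1}$. That move is only available for the Cohen coordinate, whose conditions are finite. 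A refuting condition has the form $(p,r^*)$, where $r^*$ is a countably supported function whose values are names for conditions in $Add(\omega_1,1)$ --- objects of size $\aleph_1$ that are not pointwise in the range of $j$ --- so ``$j^{-1}(r^*)$'' is simply undefined, and there is no reason the incompatibility of $(p,r^*)$ with the members of $T'_n$ is witnessed in the Cohen coordinates at all, since for an arbitrary antichain the $R$-parts of its members vary freely. Your appeal to the $\sigma$-closure of $R$ (``the countably many levels can be decided \dots take a lower bound at the end'') does not touch this: the obstruction is the maximality of the pointwise image of a \emph{single} derived antichain $\mathcal{B}=\{B:\exists A\,(A,B)\in\mathcal{A}_n,\ A\in G\}$, a $V[G]$-object to which elementarity of $j$ does not apply, not the bookkeeping across the $\omega$ levels. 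If even one $T'_n$ fails to be maximal, $H$ may miss it and your set $b$ selects no pair at that level, so it is not a branch.

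The paper's proof supplies exactly the idea your proposal is missing, and it reverses the logic: rather than handling arbitrary trees, it assumes Player Empty has a winning strategy $\sigma$ in $G_I$, normalizes $\sigma$ to output elements of $D$ via Lemma~\ref{lemma: denseequivalence}, and uses $\sigma$ to \emph{construct} a special tree whose levels $\mathcal{A}_n=\langle (X_{p^n_{i,a},r^n_{i,a}},X_{p^n_{i,b},r^n_{i,b}}):i<\gamma_n\rangle$ are countable and deliberately \emph{not} maximal, with all $R$-parts forming a decreasing family within and across levels. Countability of each level comes from the c.c.c.\ of the Cohen coordinate alone (the $p$-parts form an antichain in $Add(\omega,\lambda)\times Add(\omega,\lambda)$), and the greatest-lower-bound property of $R$ yields a single pair $(r_a,r_b)$ below every $R$-part such that each lowered level $\mathcal{A}_n\downarrow(r_a,r_b)$ --- whose members now all carry the \emph{same} $R$-parts --- is genuinely maximal below $(A\cap X_{\emptyset,r_a},B\cap X_{\emptyset,r_b})$; this is arranged by continuing the recursion at each level until no incompatible extension exists. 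With a common $R$-part $j(r_b)$ on the image side, a refuting condition $(p,r^*)$ can be normalized so that $r^*\leq_{j(R)} j(r_b)$, forcing the incompatibility into the finite Cohen coordinates, where the $j^{-1}$ pull-back is legitimate; the generic ultrapowers then produce a branch through the constructed tree with the witnesses $(\kappa,j(\kappa))$, contradicting that $\sigma$ was winning. Without this strategy-driven construction and the stabilization of the $R$-coordinates, the maximality of $T'_n$ --- the load-bearing step of your outline --- cannot be established.
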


\begin{proof}
Let $\pi: \mathbb{M}(\omega,\lambda)\to D$ be an isomorphism where $D\subseteq P(\kappa)/I$ is a dense subset. For any $(p,r)\in \mathbb{M}(\omega,\lambda)$, let $X_{p,r}$ denote $\pi(p,r)$. Assume for the sake of contradiction that $I$ is not 2-precipitous. Fix a winning strategy $\sigma$ for Player Empty in the game $G_I$. We may assume $\sigma$ satisfies the conclusion of Lemma \ref{lemma: denseequivalence} applied to $D$. To avoid cumbersome notations, we will assume for simplicity that $\sigma$ outputs elements from $D\times D$, as discussed after Remark \ref{remark: wlogdense}.
 We will use $\sigma$ to construct a tree of antichains $T=\langle \mathcal{A}_n: n\in\omega\rangle$ below $\sigma(\emptyset)=(A,B)=(X_{p^{-1}_a, r^{-1}_a}, X_{p^{-1}_b, r^{-1}_b})=\mathcal{A}_{-1}$ satisfying the following additional properties: 
	\begin{enumerate}
	\item $\mathcal{A}_{n+1}$ refines $\mathcal{A}_n$,
	\item $\mathcal{A}_n=\langle (a^n_i, b^n_i):i<\gamma_n\rangle\subseteq D$ is countable (note that it is \emph{not maximal} anymore),
	\item for each $n\in \omega,i\in \gamma_n$, there are unique $(p^n_{i,a}, r^n_{i,a}),(p^n_{i,b}, r^n_{i,b}) \in \mathbb{M}(\omega,\lambda)$ such that $X_{p^n_{i,a}, r^n_{i,a}}=a^n_i$ and $X_{p^n_{i,b}, r^n_{i,b}}=b^n_i$,
	\item for any $n$ and $i<j\in \gamma_n$, $(r^n_{j,a}, r^n_{j,b})\leq_{R\times R} (r^n_{i,a}, r^n_{i,b})$,
	\item for any lower bound $(r_0, r_1)$ for $\langle (r^n_{i,a}, r^n_{i,b}): i\in \gamma_n\rangle$, we have that $\mathcal{A}_n \downarrow (r_0, r_1)=_{def} \{X_{p^n_{i,a}, r_0}, X_{p^n_{i,b}, r_1}: i\in \gamma_n\}$ is a maximal antichain in $(A\cap X_{\emptyset, r_0}, B\cap X_{\emptyset,r_1})$,
	\item for any $n$ and $i,j$, $(r^{n+1}_{j,a}, r^{n+1}_{j,b})\leq_{R\times R} (r^n_{i,a}, r^n_{i,b})$,
	\item for any branch $\langle (A_n,B_n): n\in \omega\rangle$ through $T$, there do not exist $\alpha<\beta$ such that $\alpha\in \bigcap_{n\in\omega} A_n$ and $\beta\in \bigcap_{n\in \omega} B_n$.
 	\end{enumerate}	
 Assuming that the construction of such $T$ is possible, let us derive a contradiction. Let $(r_a, r_b)$  be the greatest lower bound in $R\times R$ for $\langle \langle (r^n_{i,a}, r^n_{i,b}): i\in \gamma_n\rangle: n\in \omega\rangle$. By property (5), we know that for each $n$, $\mathcal{A}_n\downarrow (r_a, r_b)$ is a maximal antichain below $(X_{\emptyset, r_a}\cap A, X_{\emptyset, r_b}\cap B)$ as a subset of $(P(\kappa)/I)^V$ in $V[G]$.

Force over $V$ to get a generic $G\subseteq P(\kappa)/I$ over $V$ containing $X_{\emptyset,r_a}\cap A$.
Using $G$, we find an elementary embedding $j: V\to M$ with critical point $\kappa$. In $V[G]$, consider the tree $T'$ consisting of $\mathcal{A}_n'=\{j(C): \kappa\in j(D), (D,C)\in \mathcal{A}_n \downarrow (r_a, r_b)\}$. Notice that by property (5) and the product lemma, $\mathcal{A}^*_n=\{C: j(C)\in \mathcal{A}'_n\}$ is a maximal antichain below $X_{\emptyset, r_b}\cap B$.
	\begin{claim}\label{claim: maximal}
	For each $n\in \omega$, $\mathcal{A}_n' \subseteq j(P(\kappa)/I)$ is a maximal antichain below $j(B)\cap X_{\emptyset,j(r_b)}$ in $V[G]$ (and in $M$, since $V[G]\models {}^\omega M\subseteq M$).
	\end{claim} 
	\begin{proof}[Proof of the claim]
	Otherwise, we can find $(p, r^*)\in j(\mathbb{M}(\omega,\lambda))$ below $(\emptyset, j(r_b))$ such that $X_{p,r^*}^*=_{\mathrm{def}}j(\pi)((p,r^*))\subseteq j(B)\cap X_{\emptyset, j(r_b)}$ and $X_{p,r^*}^*$ is incompatible with any element in $\mathcal{A}'_n$. By changing to an equivalent condition if necessary, we may assume that $r^*\leq_{j(R)} j(r_b) $. As a result, $p\perp_{j(Add(\omega,\lambda))}j(p^n_{k,b})$ for all $k\in \omega$. Consider $p'=j^{-1}(p)\in Add(\omega,\lambda)$. Then $p'\perp_{Add(\omega,\lambda)} p^n_{k,b}$ for all $k\in \omega$. As a result, $X_{p', r_{b}}$ is incompatible with each element in $\mathcal{A}^*_n$, but $X_{p', r_b}\cap B\cap X_{\emptyset, r_b}\in I^+$, which is a contradiction to the fact that $\mathcal{A}^*_n$ is a maximal antichain below $X_{\emptyset, r_b}\cap B$.
	\end{proof}

 Let $H\subseteq j(P(\kappa)/I)$ be a $V$-generic filter containing $j(B\cap X_{\emptyset, r_b})$. Then in $V[G*H]$, we can form an elementary embedding $k: M\to N$ with critical point $j(\kappa)$. Consider $b=\{(A_n,B_n): (\kappa,j(\kappa))\in j(A_n)\otimes k(j(B_n)), (A_n, B_n)\in \mathcal{A}_n, n\in \omega\}$. By Claim \ref{claim: maximal}, $k\circ j''b\in V[G*H]$ is a branch through $k(j(T))$ violating property (7) as witnessed by $(\kappa, j(\kappa))$. Since $N$ is a well-founded inner model of $V[G*H]$, there is such a branch in $N$. By the elementarity of $k\circ j$, there is such a branch in $V$ through $T$ violating property (7), which is a contradiction.

Let us turn to the construction of $T$. We will construct $T$ levelwise recursion. Let $\mathcal{A}_{-1}=\sigma(\emptyset)=(A,B)=(X_{p^{-1}_a, r^{-1}_a}, X_{p^{-1}_b, r^{-1}_b})$. To avoid excessive repetitions, we will assume that all the conditions from $\mathbb{M}(\omega,\lambda)\times \mathbb{M}(\omega,\lambda)$ extend $((p^{-1}_a, r^{-1}_a),(p^{-1}_b, r^{-1}_b))$.

Let us first define $T(0)=\mathcal{A}_0$. Recursively, suppose we have defined $\mathcal{A}_{0,<\eta}=\langle (X_{p^0_{i,a}, r^0_{i,a}}, X_{p^0_{i,b}, r^0_{i,b}}): i<\eta\rangle$ (partially) satisfying property (4). Let $(t_0,t_1)$ be a lower bound for $\langle (r^0_{i,a}, r^0_{i,b}): i<\eta\rangle$ in $R\times R$. If there exists $(q_0, t'_0)\leq (p_a^{-1}, t_0), (q_1, t'_1)\leq (p_b^{-1}, t_1)$ such that $(X_{q_0, t'_0}, X_{q_1, t'_1})$ is incompatible with any element in $\mathcal{A}_{0,<\eta}$, let $(Y^0_{\eta,a}, Y^0_{\eta,b})$ be one such $(X_{q_0, t'_0}, X_{q_1, t'_1})$.
Then we define $(X_{p^0_{\eta,a}, r^0_{\eta,a}}, X_{p^0_{\eta,b}, r^0_{\eta,b}})$ to be $\sigma(\langle \emptyset, (Y^0_{\eta,a}, Y^0_{\eta,b})\rangle)$. Notice that this process must stop at some countable stage $\gamma_0<\omega_1$ since $\{(p^0_{i,a}, p^0_{i,b}): i<\gamma_0\}$ is an antichain in $Add(\omega, \lambda)\times Add(\omega,\lambda)$ below $(p_a^{-1}, p_b^{-1})$, which satisfies the countable chain condition. Let us verify all the properties are satisfied. Properties (1), (2), (3), (4) and (6) are satisfied by the construction. Property (7) is not relevant at this stage. Property (5) is satisfied since we only stop when the process described above cannot be continued, which is exactly saying property (5) is satisfied.

In general, the definition of $\mathcal{A}_{n+1}$ is very similar to the construction above. Basically, for each $(C_0, C_1)\in \mathcal{A}_n$, we repeat the process above with $(C_0,C_1)$ playing the role of $\mathcal{A}_{-1}$. One difference, in order to satisfy property (6), is that at the beginning of the construction, we let $(h_0, h_1)$ be the lower bound in $R\times R$ for $\langle (r^n_{i,a}, r^n_{i,b}): i<\gamma_n\rangle$ and work below $((p_a^{-1}, h_0), (p_b^{-1}, h_1))$ in $\mathbb{M}(\omega,\lambda)\times \mathbb{M}(\omega,\lambda)$.

Finally, to see that property (7) is satisfied, notice that any branch $b$ through $T$ corresponds to a play of the game $G_I$ where Player Empty is playing according to their winning strategy $\sigma$. More precisely, $b$ is the sequence of sets played by Player Empty according to $\sigma$ in a play of the game $G_I$.
As a result, the winning condition of Player Empty ensures (7) is satisfied. \end{proof}

\begin{corollary}
It is consistent that $\aleph_2\to_{hc}[\aleph_2]^2_{\omega,2}$ and $2^{\aleph_0}\geq \aleph_2$.
\end{corollary}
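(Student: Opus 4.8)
The plan is to combine the two main results of this section. By Theorem \ref{theorem: 2preci} it suffices to produce a model of $2^{\aleph_0}\geq \aleph_2$ in which $\aleph_2$ carries a uniform normal $2$-precipitous ideal, and by Proposition \ref{proposition: mitchell} it is in turn enough to arrange a uniform normal ideal $I$ on $\aleph_2$ with $P(\aleph_2)/I\simeq \mathbb{B}(\mathbb{M}(\omega,\lambda))$ for some $\lambda$. The continuum requirement will come for free: the Mitchell forcing $\mathbb{M}(\omega,\kappa)$ adds $\kappa$ Cohen reals through its projection (1) onto $Add(\omega,\kappa)$, so any model obtained by collapsing $\kappa$ to $\aleph_2$ with it satisfies $2^{\aleph_0}=\kappa\geq \aleph_2$ and hence $\neg\mathsf{CH}$. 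Thus the whole burden is to manufacture the ideal $I$ with the prescribed quotient, and for this I would use the classical Kunen-style method of reading an ideal off a lifted generic ultrapower embedding, with the usual Levy collapse replaced by the Mitchell collapse.

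Concretely, I would start from a large cardinal of the kind used in Kunen's construction of saturated ideals, e.g.\ an almost huge cardinal $\kappa$, fixing $j\colon V\to M$ with $\mathrm{crit}(j)=\kappa$, $j(\kappa)=\lambda$ and ${}^{<\lambda}M\subseteq M$, and force with $\mathbb{M}(\omega,\kappa)$ to obtain $G$. In $V[G]$ one has $\kappa=\aleph_2$ by the standard analysis of the Mitchell forcing (it is $\kappa$-c.c., collapses every cardinal in $(\omega_1,\kappa)$, and preserves $\omega_1$ via the $Add(\omega,\cdot)\times R$ factorization recorded before the proposition), together with $2^{\aleph_0}=\kappa$. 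The role of the hugeness is that $j(\mathbb{M}(\omega,\kappa))=\mathbb{M}(\omega,\lambda)^M$ factors as $\mathbb{M}(\omega,\kappa)*\dot{\mathbb{M}}_{[\kappa,\lambda)}$, so that $G$ already supplies a generic for the first factor; using the closure ${}^{<\lambda}M\subseteq M$ and a master-condition argument over the tail factor, I would build an $M$-generic $G_{\mathrm{tail}}$ and lift $j$ to $j\colon V[G]\to M[G][G_{\mathrm{tail}}]$. Defining $I$ on $\kappa$ in $V[G]$ as the ideal induced by this generic embedding --- $X\in I$ iff the tail forcing forces $\kappa\notin j(X)$ --- yields an ideal that is uniform (bounded sets are null, as $\mathrm{crit}(j)=\kappa$) and normal (since $j$ is generically the ultrapower by a normal fine measure with $\kappa=[\mathrm{id}]$).

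The heart of the matter, and the step I expect to be the main obstacle, is the identification $P(\kappa)/I\simeq \mathbb{B}(\mathbb{M}(\omega,\lambda))$. By the general theory of ideals induced from generic embeddings (see \cite{ForemanHandbook}), the quotient $P(\kappa)/I$ is forcing-equivalent to the tail quotient $\mathbb{M}_{[\kappa,\lambda)}$ as computed in $V[G]$; what must be shown is that this interval forcing is, in $V[G]$, isomorphic to a genuine Mitchell forcing $\mathbb{M}(\omega,\lambda)$. This is exactly where the universality and homogeneity of the Mitchell collapse, together with $\lambda=j(\kappa)$ and the hugeness closure (ensuring $\mathbb{M}_{[\kappa,\lambda)}$ is correctly computed in $M$ and agrees with the true interval forcing), must be exploited, in parallel with the analogous absorption fact for the Levy collapse in \cite{CummingsSurvey}. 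A secondary technical difficulty worth flagging is the construction of the lift itself: because Mitchell forcing is not $\kappa$-closed, the master condition over the tail must be assembled through the $Add(\omega,\cdot)\times R$ factorization, exploiting that the $R$-part has greatest lower bounds for countable decreasing sequences. Once the isomorphism is in hand, Proposition \ref{proposition: mitchell} shows $I$ is $2$-precipitous, Theorem \ref{theorem: 2preci} gives $\aleph_2\to_{hc}[\aleph_2]^2_{\omega,2}$, and since $2^{\aleph_0}=\kappa\geq \aleph_2$ the corollary follows.
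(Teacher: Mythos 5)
Your overall route coincides with the paper's: Mitchell-collapse a large cardinal $\kappa$ to $\aleph_2$, check that the induced ideal on $\kappa$ has quotient isomorphic to $\mathbb{B}(\mathbb{M}(\omega,\lambda))$, and then apply Proposition \ref{proposition: mitchell} and Theorem \ref{theorem: 2preci}; your observation that $\neg\mathsf{CH}$ comes for free from the projection onto $Add(\omega,\kappa)$ is also exactly right. The main deviation is your hypothesis: the paper starts from a \emph{measurable} cardinal and cites \cite[Theorem 23.2]{CummingsSurvey} for precisely the quotient identification you are trying to engineer. Your almost huge cardinal buys nothing here. Since $\mathbb{M}(\omega,\kappa)$ is $\kappa$-c.c.\ and its conditions can be taken to lie in $V_\kappa$ (finite Cohen parts, countable supports bounded below $\kappa$, nice names of hereditary size $<\kappa$), any embedding $j$ with critical point $\kappa$ satisfies $j(q)=q$ for all $q$ in the poset; hence $j''G\subseteq G\ast G_{\mathrm{tail}}$ holds automatically for \emph{any} generic $G_{\mathrm{tail}}$ for the tail, and no master condition is needed at all. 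For the literal consistency statement the inflated hypothesis is not fatal, but it is unnecessary and misstates where the work lies.

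Two points in your write-up are genuine problems. First, the lifting step as you describe it would fail: you cannot ``build an $M$-generic $G_{\mathrm{tail}}$'' \emph{inside} $V[G]$. The tail of the Mitchell forcing adds reals, $M[G]$ is closed under $\omega$-sequences in $V[G]$ (as used repeatedly in the paper), and the countable closure of the $R$-part is nowhere near enough to meet the $j(2^\kappa)$-many maximal antichains lying in $M[G]$; the correct move, which your own displayed definition of $I$ (``$X\in I$ iff the tail forcing forces $\kappa\notin j^+(X)$'') already implements, is to define $I$ Boolean-valuedly in $V[G]$ and let the lift exist only in the further generic extension --- at which point, as noted above, the master-condition apparatus you invoke is superfluous. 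Second, the step you yourself flag as the heart of the matter --- that the tail quotient is, in $V[G]$, forcing-isomorphic to a genuine $\mathbb{M}(\omega,\lambda)$, and that duality for induced ideals (in the sense of \cite{ForemanHandbook}) transfers this to $P(\kappa)/I\simeq\mathbb{B}(\mathbb{M}(\omega,\lambda))$ --- is left unproved: you gesture at homogeneity and at the Levy-collapse absorption analogue but give no argument. That identification is a known fact, provable from a measurable, and it is exactly what the paper cites; so your proposal is completable by replacing the almost huge machinery with that citation, but as written the crux is a gap rather than a proof.
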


\begin{proof}
Let $\kappa$ be a measurable cardinal. Then in $V^{\mathbb{M}(\omega,\kappa)}$, $2^{\aleph_0}\geq \aleph_2$ and there is an ideal satisfying the hypothesis of Proposition \ref{proposition: mitchell} (see for example \cite[Theorem 23.2]{CummingsSurvey}). Apply Proposition \ref{proposition: mitchell} and Theorem \ref{theorem: 2preci}.
\end{proof}

\section{$\sigma$-closed ideals and monochromatic highly connected subgraphs}\label{section: closedideal}
In this section, we prove the following theorem.
\begin{theorem}\label{theorem: main}
Suppose a regular cardinal $\kappa$ carries a countably complete uniform ideal $I$ such that there exists a dense $\sigma$-closed collection $H\subseteq I^+$. Then $\kappa\to_{hc}(\kappa)^2_\omega$. Moreover, $\kappa\to_{hc, <4}(\kappa)^2_\omega$ holds.
\end{theorem}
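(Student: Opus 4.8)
The plan is to exploit the dense $\sigma$-closed collection $H\subseteq I^+$ to build, by a recursion of length $\omega$, a single element $Y\in H$ that simultaneously encodes a color $k$ and enough combinatorial homogeneity to guarantee high connectivity via short paths. The strategy mirrors the structure of the proof of Proposition 4 (the $\aleph_2\to_{hc}(\aleph_1)^2_\omega$ result), but now the generic object is replaced by a descending $\sigma$-closed sequence whose lower bound exists precisely because $H$ is $\sigma$-closed.

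First I would fix a coloring $c:[\kappa]^2\to\omega$ and, for each $\alpha<\kappa$ and $i\in\omega$, set $X_{\alpha,i}=\{\gamma:c(\gamma,\alpha)=i\}$ or the analogous ``neighborhood in color $i$'' sets, exactly as in the earlier proof. The aim is to find a color $k$ and a positive set $A\in H$ together with an unbounded ``witness'' set $B$ so that: (1) any two vertices of $A$ have arbitrarily late common $k$-neighbors in $B$, and (2) each vertex of $B$ has arbitrarily late $k$-neighbors in $A$. Since there is no saturation hypothesis here, the generic-embedding machinery of Proposition 4 is unavailable; instead the density and $\sigma$-closure of $H$ must play the role of genericity. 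I would use countable completeness of $I$ to split a fixed positive set according to color and extract, by density of $H$, a positive set in $H$ on which a single color $k$ dominates in the required sense; then I would build a decreasing $\omega$-sequence in $H$ diagonalizing against the countably many requirements indexed by pairs from a fixed enumeration of $[A]^2$, taking the $\sigma$-closed lower bound at each limit of the bookkeeping.

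The high-connectivity conclusion, and crucially the bound on path length, is then a short graph-theoretic argument. Having arranged that $A$ and $B$ interlock in color $k$, I would check the four cases (both endpoints in $A$; one in each; both in $B$) after deleting any $C$ of size $<\kappa$: two vertices of $A$ share a common $k$-neighbor in $B\setminus C$ (a path of length $2$), a vertex of $A$ and a vertex of $B$ are joined by a path $\alpha,\beta',\alpha',\beta$ of length $3$, and the two-in-$B$ case reduces to the mixed case by first hopping to $A$, again yielding length $\le 3$. This is exactly the ``$<4$'' refinement, matching the pattern of the claim inside Proposition 4.

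The main obstacle I anticipate is replacing the generic elementary embedding used in the saturated-ideal proofs with a purely combinatorial recursion: the $\sigma$-closed dense family $H$ gives lower bounds for countable descending sequences, but I must verify that at each step the relevant requirement (finding a late $k$-neighbor for the current pair while staying positive) can actually be met \emph{inside} $H$, i.e. that the set of positive sets witnessing a given requirement is dense below the current condition. This density-of-requirements verification, together with ensuring a single fixed color $k$ works uniformly (so that countable completeness pins down one color rather than cofinally many), is where the real work lies; the length-$<4$ counting at the end should then follow formally.
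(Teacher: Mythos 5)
There is a genuine gap, and it sits exactly where you locate ``the real work'': the $A$--$B$ structure of Proposition 2.2 cannot be transplanted to this setting. First, a counting problem. For $\kappa\to_{hc}(\kappa)^2_\omega$ the homogeneous set must have size $\kappa$, so your $A$ (an $I$-positive set for a uniform ideal) has size $\kappa$, and $[A]^2$ indexes $\kappa$-many requirements, not countably many as you assert; moreover each requirement must be met $\kappa$-many times, since the deleted set $C$ now has size $<\kappa$, so every pair of $A$ would need $\kappa$-many common $k$-neighbors surviving in $B$. Your recursion is a decreasing $\omega$-sequence in $H$, and $\sigma$-closure pays for diagonalizing against only countably many requirements. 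In Proposition 2.2 the analogous recursion has length $\omega_1$ and each step is financed by the $(M,I^+)$-genericity of a master condition --- a properness/saturation ingredient explicitly absent here --- and even so it only produces a homogeneous set of size $\aleph_1<\aleph_2$, which is why that argument gives $\aleph_2\to_{hc}(\aleph_1)^2_\omega$ rather than a full-size homogeneous set. Second, your requirement (1) asks for more than the hypothesis can deliver: for $\alpha_0\neq\alpha_1$ the neighborhoods $\{\beta: c(\alpha_0,\beta)=k\}$ and $\{\beta: c(\alpha_1,\beta)=k\}$ can each be $I$-positive while their intersection is in $I$ or empty, since $I$ is nowhere close to an ultrafilter; so the length-$2$ paths you promise for pairs inside $A$ cannot be secured uniformly across a set of size $\kappa$. (The consistency, from the same ideal hypothesis, of $\aleph_2\not\to_{hc,<3}(\aleph_2)^2_\omega$ proved in Section 6 is a further warning against any scheme built on common neighbors, though it does not by itself refute your mixed structure.)

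The paper's proof avoids both problems by never controlling individual pairs of vertices. It defines a pair of positive sets $(B_0,B_1)$ to be \emph{$i$-frequent} if below any positive $B_0'\subseteq B_0$ and $B_1'\subseteq B_1$ there are, positively often, color-$i$ edges across the pair, and proves (Claim 5.3) by a contradiction argument --- two $\omega$-fusions inside the $\sigma$-closed family $H$, plus countable completeness of $I$ to pin down a single color --- that some one color $i$ admits $i$-frequent pairs below every positive subset of some $B$. It then builds only $\omega$-many positive sets $\langle B_n: n\in\omega\rangle$ with $(B_n,B_k)$ $i$-frequent for all $n<k$, and prunes each to $B_n^*=_I B_n$ so that every $\alpha\in B_n^*$ has $I$-positive color-$i$ neighborhood in every later block $B_k$ (countably many conditions per vertex, so $\sigma$-completeness suffices --- this is where a version of your ``arbitrarily late neighbors'' survives, but per vertex, not per pair). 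The homogeneous set is $B=\bigcup_n B_n^*$, of size $\kappa$, and any two surviving vertices $\alpha,\beta$ are joined by a path $\alpha,\gamma_0,\gamma_1,\beta$ of length exactly $3$: choose a block $B_k$ beyond both, use positivity of the two neighborhoods to pick $\gamma_0\in B_k^*$ and $\gamma_1\in B_{k+1}^*$ avoiding $C$, and use $i$-frequency of $(B_k,B_{k+1})$ to supply the middle edge. Making the homogeneous set a countable union of interlocking positive blocks, rather than a set whose pairs are individually serviced, is the idea your proposal is missing; with it, the $<4$ bound falls out exactly as in your mixed case, and length-$2$ paths are never needed.
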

Fix an ideal $I$ as in the hypothesis of Theorem \ref{theorem: main}. It is worth comparing such ideal with those of the previous section: 
	\begin{itemize}
	\item we do not insist that $I$ is normal any more,
	\item we impose the stronger requirement that the ideal has a $\sigma$-closed dense subset; note that any such ideal is 2-precipitous.
	\end{itemize}

Fix a coloring $c: [\kappa]^2\to \omega$. Given $B_0, B_1\in I^+$ and $i\in \omega$, we say \emph{$(B_0,B_1)$ is $i$-frequent} if for any positive $B_0'\subseteq B_0$ and positive $B_1'\subseteq B_1$, it is the case that $\{\alpha\in B_0': \{\beta\in B_1': c(\alpha,\beta)=i\}\in I^+\}\in I^+$.

\begin{remark}\label{remark: large}
Equivalently, $(B_0,B_1)$ is $i$-frequent if for any positive $B_1'\subseteq B_1$, it is the case that $\{\alpha\in B_0: \{\beta\in B_1': c(\alpha,\beta)=i\}\in I^+\}\in I^*\restriction B_0$. See the argument in Claim \ref{claim: normality}.
\end{remark}

\begin{claim}\label{claim: split}
There exists a positive $B\in I^+$ and $i\in \omega$ such that for any positive $B'\subseteq B$, there are positive $B_0, B_1\subseteq B'$ such that $(B_0,B_1)$ is $i$-frequent.
\end{claim}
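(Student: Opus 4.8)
The plan is to prove Claim~\ref{claim: split} by a fusion-style recursion, using the $\sigma$-closure of the dense set $H\subseteq I^+$ together with the countable completeness of $I$, and deriving the conclusion by contradiction. Suppose the claim fails. Then for \emph{every} positive $B\in I^+$ and every $i\in\omega$ there is a positive $B'\subseteq B$ such that no $i$-frequent pair $(B_0,B_1)$ lives below $B'$. I would try to exploit this to build a $\subseteq$-decreasing sequence inside the $\sigma$-closed collection $H$ that diagonalizes against all colors simultaneously, producing a single positive set $B^*$ below which \emph{no} pair is $i$-frequent for \emph{any} $i\in\omega$ --- and then show this is impossible.

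First I would set up the recursion. Enumerate $\omega$ (as the set of colors) and work inside $H$: starting from any $H_0\in H$, having built $B_n\in H$, use the failure of the claim applied to $B_n$ and color $n$ to pass to a positive $B_n'\subseteq B_n$ below which no $i$-frequent pair exists for $i=n$; then by density of $H$ in $I^+$ shrink $B_n'$ to some $B_{n+1}\in H$ with $B_{n+1}\subseteq B_n'$. Because $H$ is $\sigma$-closed and dense, the decreasing $\omega$-sequence $\langle B_n:n\in\omega\rangle$ has a positive lower bound $B^*\in H$ (this is exactly where $\sigma$-closure is doing the essential work). Since $B^*\subseteq B_{n+1}$ lies below the set that killed $n$-frequency, I would conclude that for every $i\in\omega$, no pair $(B_0,B_1)$ of positive subsets of $B^*$ is $i$-frequent.

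Now I would derive the contradiction from $B^*$. Take any two positive subsets, say $B^*$ and $B^*$ itself (or split $B^*$ into two positive halves). For each color $i$, failure of $i$-frequency gives, for some positive $B_0^{(i)},B_1^{(i)}\subseteq B^*$, a \emph{witness to non-frequency}: the set $\{\alpha\in B_0^{(i)}:\{\beta\in B_1^{(i)}:c(\alpha,\beta)=i\}\in I^+\}$ lies in $I$. The key point is that every pair $\{\alpha,\beta\}\subseteq B^*$ receives \emph{some} color $c(\alpha,\beta)\in\omega$, so one cannot avoid all colors at once. I would make this precise by using countable completeness of $I$: intersecting or diagonalizing the (countably many) non-frequency witnesses across all colors should produce a positive pair on which, for every $\alpha$ in the left side, the set $\{\beta:c(\alpha,\beta)=i\}$ is null for each $i$, hence $\{\beta:c(\alpha,\beta)\in\omega\}=\bigcup_i\{\beta:c(\alpha,\beta)=i\}$ is a countable union of null sets and thus null by $\sigma$-completeness --- contradicting that this set is the whole positive right-hand side minus $\alpha$.

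The main obstacle I expect is the bookkeeping in the last step: a single color-$i$ non-frequency witness is stated relative to a \emph{pair} $(B_0^{(i)},B_1^{(i)})$ that may depend on $i$, so I cannot directly intersect right-hand sides of different colors. The clean fix is to build the failure uniformly: rather than merely using one witness per color, I would strengthen the recursion so that at stage $n$ I fix \emph{one} ambient pair and record that color $n$ fails $i$-frequency there, arranging (again via $\sigma$-closure, shrinking both coordinates through $H$ in lockstep) that a single final pair $(C_0,C_1)$ of positive subsets of $B^*$ simultaneously witnesses non-$i$-frequency for every $i$. Concretely this means: for each $i$ there is a positive $D_0^{i}\subseteq C_0$ and positive $D_1^{i}\subseteq C_1$ with $\{\alpha\in D_0^i:\{\beta\in D_1^i:c(\alpha,\beta)=i\}\in I^+\}\in I$. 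Carrying a fixed $(C_0,C_1)$ through the fusion is the delicate part, but once it is in place, fix $\alpha$ in a positive subset of $C_0$ generic enough to avoid all the (null) bad sets; then for each $i$ the fiber $\{\beta\in C_1:c(\alpha,\beta)=i\}$ is $I$-null, so $C_1\setminus\{\alpha\}=\bigcup_{i\in\omega}\{\beta\in C_1:c(\alpha,\beta)=i\}\in I$ by countable completeness, contradicting $C_1\in I^+$.
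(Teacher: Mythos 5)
Your proposal is correct and is essentially the paper's own proof: the paper likewise first fuses through the $\sigma$-closed dense collection $H$ against all colors to obtain a single positive $B'$ below which no pair is $i$-frequent for any $i\in\omega$, and then runs exactly your ``lockstep'' recursion, producing a decreasing sequence of pairs $(C_k,D_k)$ of positive subsets of $B'$ such that every $\alpha\in C_k$ has $I$-null $k$-fiber in $D_k$, intersecting by $\sigma$-closure, and contradicting countable completeness. One caution: your intermediate gloss (non-frequency witnessed by $i$-dependent sub-pairs $D_0^i, D_1^i$) is, as literally stated, too weak for your final step, since the fibers are then only null inside $D_1^i$ rather than inside $C_1$ --- but the lockstep shrinking you describe, removing the $I$-null bad set and passing both coordinates through $H$ at each stage, does deliver the needed uniform property, just as in the paper.
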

In the following proof, to avoid repetitions, whenever we mention a positive set, we implicitly assume the positive set is in the $\sigma$-closed dense collection $H$.
\begin{proof}
Suppose otherwise for the sake of contradiction.
For $A\in I^+$ and $j\in \omega$, let $(*)_{A,j}$ abbreviate the assertion: there are positive sets $B_0, B_1\subseteq A$ such that $(B_0, B_1)$ is $j$-frequent. By the hypothesis, we can recursively define $\langle B'_k \in I^+: k\in \omega\rangle$ such that 

	\begin{itemize}
	\item $B'_0 = \kappa$,
	\item for any $k\in \omega$, $B'_{k+1}\subseteq B'_k$ and $\neg (*)_{B'_{k+1},k}$.
	\end{itemize}

Let $B'=\bigcap_{k\in \omega} B'_k$. By the $\sigma$-closure of $I$, we have that $B'\in I^+$. Then it satisfies that for any $i\in \omega$, such that there are no positive $B_0, B_1\subseteq B'$ such that $(B_0, B_1)$ is $i$-frequent.

Recursively construct an $\omega$-sequence of pairs of $I$-positive sets $$\langle (C_k,D_k): k\in \omega\rangle$$ as follows: start with $(B',B')=(C_{-1}, D_{-1})$; since it is not $0$-frequent, there are positive $(C_0, D_0)$ such that for all $\alpha\in C_0$, $\{\beta\in D_0: c(\alpha,\beta)=0\}\in I$. In general, as $(C_i, D_i)$ is not $i+1$-frequent, we can find $(C_{i+1}, D_{i+1})$ such that for all $\alpha\in C_i$, $\{\beta\in D_i: c(\alpha,\beta)=i+1\}\in I$. Let $C^*=\bigcap_{i\in \omega} C_i$ and $D^*=\bigcap_{i\in \omega} D_i$. By the $\sigma$-closure of $I$, both $C^*$ and $D^*$ are in $I^+$. By the $\sigma$-completeness of the ideal, we can find some $i$ and $\alpha\in C^*$ such that $\{\beta\in D^*: c(\alpha,\beta)=i\}\in I^+$. However, this contradicts with the assumption that $\alpha\in C_i$.
\end{proof}

To finish the proof of Theorem \ref{theorem: main}, by repeatedly applying Claim \ref{claim: split}, we can find $\langle B_n\in I^+: n\in \omega\rangle$ and $i\in \omega$ such that for any $n<k$, $(B_n, B_k)$ is $i$-frequent. 

Given $n\in \omega$, let $B_n^*\subseteq B_n$ be the collection of $\alpha\in B_n$ satisfying that for any $k>n$, $\{\beta\in B_k: c(\alpha,\beta)=i\}\in I^+$. Notice that $B^*_n=_I B_n$ by Remark \ref{remark: large} and the fact that $I$ is $\sigma$-complete.

We claim that $B=\bigcup_{n\in \omega} B^*_n$ is highly connected in the color $i$.
Given $\alpha<\beta\in B$ and $C\in [B]^{<\kappa}$, there must be some $n_0, n_1 \in \omega$ such that $\alpha\in B^*_{n_0}$ and $\beta\in B^*_{n_1}$. Find $k>\max \{n_0, n_1\}$. By the hypothesis, $C_0=\{\gamma\in B_k^*: c(\alpha,\gamma)=i\}\in I^+$ and $C_1=\{\gamma\in B_{k+1}^*: c(\beta,\gamma)=i\}\in I^+$. As $(B_k, B_{k+1})$ is $i$-frequent, we can find $\gamma_0\in C_0\setminus C$ and $\gamma_1\in C_1 \setminus C$ such that $c(\gamma_0, \gamma_1)=i$. As a result, $\alpha, \gamma_0, \gamma_1, \beta$ is the required path of color $i$.

\section{Remarks on the consistency of the ideal hypothesis}\label{section: consistencyClosed}
For regular cardinal $\lambda\geq \kappa$, if $\kappa$ be $\lambda$-supercompact, we show that in $V^{Coll(\omega_1, <\kappa)}$, $\lambda$ carries a $\kappa$-complete uniform ideal which admits a dense and $\sigma$-closed collection of positive sets. The construction is due to Galvin, Jech, Magidor \cite{GalvinJechMagidor} and, independently to Laver \cite{Laver}. We supply a proof for the sake of completeness.

Let $U$ be a fine normal ultrafilter on $P_{\kappa}\lambda$. By a theorem of Solovay (see \cite[Theorem 14]{HODDichotomy} for a proof), there exists $B\in U$ such that $a\in B\mapsto \sup a$ is injective. Let $j: V\to M\simeq Ult(V, U)$. Let $\delta=\sup j''\lambda$. Let $G\subseteq Coll(\omega_1, <\kappa)$ be generic over $V$. It is well-known that if $H\subseteq Coll(\omega_1,[\kappa, j(\kappa)))$ is generic over $V[G]$, then we can lift $j$ to $j^+: V[G]\to M[G*H]$ in $V[G*H]$. 

In $V[G]$, consider the ideal 
$$I=\{X\subseteq \lambda: \Vdash_{Coll(\omega_1, [\kappa,<j(\kappa)))} \delta \not\in j^+(X)\}.$$ 
The fact that $I$ is $\kappa$-complete and uniform is immediate. Let us show that there exists a dense $\sigma$-closed collection of positive sets.

For each $r\in Coll(\omega_1, <j(\kappa))^M/G$, there exists a function $f_r: B\to P$ such that $j(f_r)(j'' \lambda)=r$. Define $X_r=\{\sup a: a\in B, f_r(a)\in G\}$. 

\begin{claim}
$X_r\in I^+$.
\end{claim}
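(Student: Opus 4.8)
The plan is to prove the slightly stronger statement that $r$, regarded as a condition of the tail collapse $Coll(\omega_1,[\kappa,<j(\kappa)))$, already forces $\delta\in j^+(X_r)$. By the very definition of $I$ this suffices: if $X_r\in I$ then $\Vdash_{Coll(\omega_1,[\kappa,<j(\kappa)))}\delta\notin j^+(X_r)$, which contradicts the existence of a condition forcing $\delta\in j^+(X_r)$; hence $X_r\in I^+$.

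First I would compute $j^+(X_r)$. The set $X_r=\{\sup a: a\in B,\ f_r(a)\in G\}$ is defined in $V[G]$ from the parameters $B$ and $f_r$, both of which lie in $V$, together with the generic $G$. Applying the elementary embedding $j^+:V[G]\to M[G*H]$ and using that $j^+\restriction V=j$ and that $j^+(G)=G*H$ is the lifted generic for $j(Coll(\omega_1,<\kappa))=Coll(\omega_1,<j(\kappa))^M$, elementarity yields
$$j^+(X_r)=\{\sup a:\ a\in j(B),\ j(f_r)(a)\in G*H\}.$$

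Next I would locate $\delta$ inside this set. Since $U$ is a normal fine ultrafilter, its generating point is $[\mathrm{id}]_U=j''\lambda$, so $B\in U$ gives $j''\lambda\in j(B)$, while by definition $\sup(j''\lambda)=\delta$. Moreover the map $a\mapsto\sup a$ is injective on $B$, hence by elementarity it is injective on $j(B)$; therefore $j''\lambda$ is the \emph{unique} $a\in j(B)$ with $\sup a=\delta$. Consequently $\delta\in j^+(X_r)$ holds if and only if this distinguished preimage contributes, i.e. if and only if $j(f_r)(j''\lambda)\in G*H$. But $j(f_r)(j''\lambda)=r$ by the choice of $f_r$, and since $r$ is a condition of the tail part $Coll(\omega_1,[\kappa,<j(\kappa)))$, the factoring of the lifted generic shows that $r\in G*H$ is equivalent to $r\in H$. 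As $r\Vdash r\in\dot H$, we conclude $r\Vdash\delta\in j^+(X_r)$, which is exactly what was needed.

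The step requiring the most care is the computation of $j^+(X_r)$ together with the uniqueness of the sup-preimage: it is precisely the injectivity of $a\mapsto\sup a$ on $B$ (the reason $B$ was extracted via Solovay's theorem) that collapses the condition ``$\delta\in j^+(X_r)$'' to the single clause ``$r\in G*H$'', after which the factoring of $G*H$ into the ground part $G$ and the tail part $H$ turns this clause into the forcing statement $r\Vdash\delta\in j^+(X_r)$. Everything else is routine bookkeeping about the lift $j^+$ and the product/quotient structure of the Levy collapse.
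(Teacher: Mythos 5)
Your proof is correct and follows essentially the same route as the paper: both reduce the claim to showing $r\Vdash \delta\in j^+(X_r)$, lift $j$ to $j^+:V[G]\to M[G*H]$ for a generic containing $r$, and conclude via $j(f_r)(j''\lambda)=r\in j^+(G)$ together with $j''\lambda\in j(B)$ and $\sup(j''\lambda)=\delta$. Your additional uniqueness argument via the injectivity of $a\mapsto\sup a$ is correct but not needed for this claim (only the witness direction matters here); it is exactly the ingredient the paper saves for the subsequent claim that $X_r\subseteq_I X_{r'}$ iff $r\leq r'$.
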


\begin{proof}
It suffices to check that $r\Vdash \delta\in j^+(X_r)$. Let $H\subseteq Coll(\omega_1, <j(\kappa))^M/G$ containing $r$ be generic over $V[G]$, then we can lift $j$ to $j^+: V[G]\to M[H]$. In particular, $H=j^+(G)$. Since $j(f_r)(j''\lambda)=r\in j^+(G)$, we have that $\delta\in j^+(X_r)$.
\end{proof}

\begin{claim}
$X_r\subseteq_I X_{r'}$ iff $r\leq_{Coll(\omega_1, <j(\kappa))^M/G} r'$.
\end{claim}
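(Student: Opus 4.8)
The plan is to reduce the whole statement to one transparent criterion for when $\delta$ belongs to $j^+(X_r)$. Write $\mathbb{Q}=Coll(\omega_1,<j(\kappa))^M/G$ and let $\dot H$ be the canonical name for a $\mathbb{Q}$-generic, so that lifting $j$ along such an $H$ yields $j^+\colon V[G]\to M[G*H]$ with $j^+(G)=G*H$. The engine of the argument is the equivalence
$$\delta\in j^+(X_r)\iff r\in H,$$
valid for every $\mathbb{Q}$-generic $H$ and every $r\in\mathbb{Q}$. This is a refinement of the preceding claim, whose proof already showed $r\Vdash\delta\in j^+(X_r)$.

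To establish the equivalence I would first apply $j^+$ to the definition of $X_r$. Since $B,f_r\in V$ we have $j^+(B)=j(B)$ and $j^+(f_r)=j(f_r)$, so by elementarity
$$j^+(X_r)=\{\sup a: a\in j(B),\ j(f_r)(a)\in j^+(G)\}.$$
Now $j''\lambda\in j(B)$ because $B\in U$ and $j''\lambda=[\mathrm{id}]_U$ is the canonical point represented by the identity, while $\sup(j''\lambda)=\delta$ by the definition of $\delta$. Moreover $a\mapsto\sup a$ is injective on $j(B)$ by elementarity (it is injective on $B$), so $j''\lambda$ is the \emph{unique} member of $j(B)$ with supremum $\delta$. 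Hence $\delta\in j^+(X_r)$ iff $j(f_r)(j''\lambda)\in j^+(G)$, and since $j(f_r)(j''\lambda)=r$ this says precisely $r\in j^+(G)$. Finally, as $j^+(G)=G*H$ and $r$ is a condition in the quotient part, $r\in j^+(G)$ iff $r\in H$, which gives the equivalence.

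With the equivalence in hand the claim becomes a routine translation. By definition $X_r\subseteq_I X_{r'}$ means $X_r\setminus X_{r'}\in I$, i.e.\ $\Vdash_{\mathbb{Q}}\delta\notin j^+(X_r\setminus X_{r'})$. Since $j^+$ preserves set difference, $j^+(X_r\setminus X_{r'})=j^+(X_r)\setminus j^+(X_{r'})$, so applying the equivalence to both $r$ and $r'$ turns the condition into
$$\Vdash_{\mathbb{Q}}\big(r\in\dot H\to r'\in\dot H\big).$$
It then remains to note that, for the separative poset $\mathbb{Q}$, this forcing statement holds exactly when $r\leq_{\mathbb{Q}}r'$: taking the condition $q=r$, which forces $r\in\dot H$, forces $r'\in\dot H$ and hence $r\leq r'$; conversely, if $r\leq r'$ then any condition forcing $r\in\dot H$ is below $r'$ and so forces $r'\in\dot H$. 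This yields $X_r\subseteq_I X_{r'}\iff r\leq_{\mathbb{Q}}r'$, as desired.

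The only genuinely delicate points are bookkeeping rather than conceptual: correctly identifying the quotient $\mathbb{Q}=Coll(\omega_1,<j(\kappa))^M/G$ with the poset over which $I$ is defined, and justifying $r\in j^+(G)\iff r\in H$ through the factorization $j^+(G)=G*H$, together with the (standard) separativity of the Levy collapse needed for the last step. Everything else—elementarity, $j''\lambda\in j(B)$, injectivity of $a\mapsto\sup a$ on $j(B)$, and $\sup(j''\lambda)=\delta$—is immediate from the setup.
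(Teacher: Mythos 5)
Your proof is correct and takes essentially the same route as the paper's: both arguments rest on the separativity of the quotient collapse together with the fact that $\delta\in j^+(X_r)$ holds exactly when $r$ enters the generic filter, which is where Solovay's injectivity of $a\mapsto\sup a$ on $B$ (and hence the uniqueness of $j''\lambda$ as the member of $j(B)$ with supremum $\delta$) is used. The paper handles the nontrivial direction by choosing $r^*\leq r$ incompatible with $r'$ and observing $r^*\Vdash\delta\in j^+(X_r)\setminus j^+(X_{r'})$, whereas you first isolate the explicit equivalence $\delta\in j^+(X_r)\iff r\in\dot H$ and then translate; this is a clean repackaging of the same argument, not a different one.
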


\begin{proof}
If $r\leq r'$, then it is clear that $X_r\subseteq_I X_{r'}$. For the other direction, suppose for the sake of contradiction that $r\not\leq_{Coll(\omega_1, <j(\kappa))^M/G} r'$. In particular, there is some extension $r^*$ of $r$ that is incompatible with $r'$. Then $r^*\Vdash \delta\in j^+(X_r)\setminus j^+(X_{r'})$. Hence, $X_{r}\not\subseteq_I X_{r'}$.
\end{proof}

As a result, 
$$\{X_r: r\in Coll(\omega_1, <j(\kappa))^M/G\}$$
is $\sigma$-closed, since $ Coll(\omega_1, <j(\kappa))^M/G$ is $\sigma$-closed in $V[G]$.

\begin{claim}
For any $X\in I^+$, there exists some $r$ such that $X_r\subseteq_I X$.
\end{claim}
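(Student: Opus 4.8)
The plan is to exhibit, for a given positive $X$, a condition $r$ in the quotient forcing $Coll(\omega_1, <j(\kappa))^M/G$ whose associated set $X_r$ refines $X$ modulo $I$; together with the previous claims this shows $\{X_r : r\in Coll(\omega_1, <j(\kappa))^M/G\}$ is a $\sigma$-closed dense subset of $I^+$. The key preliminary step, which upgrades the first Claim to a biconditional, is to establish that for every $r$ in the quotient,
$$\Vdash_{Coll(\omega_1, <j(\kappa))^M/G}\ \bigl(\delta\in j^+(X_r)\ \leftrightarrow\ r\in \dot H\bigr),$$
where $\dot H$ names the quotient-generic. Indeed, working in $V[G*H]$ and applying $j^+$ to the definition of $X_r$, elementarity gives $j^+(X_r)=\{\sup a : a\in j(B),\ j(f_r)(a)\in G*H\}$. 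Since $a\mapsto \sup a$ is injective on $j(B)$ (by elementarity, as it is on $B$) and $\delta=\sup(j''\lambda)$ with $j''\lambda\in j(B)$, the membership $\delta\in j^+(X_r)$ forces the witness to be $a=j''\lambda$, whence it is equivalent to $j(f_r)(j''\lambda)=r\in G*H$, i.e.\ to $r\in H$.

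Granting the equivalence, I would proceed as follows. Since $X\in I^+$, the definition of $I$ says it is \emph{not} forced that $\delta\notin j^+(X)$, so I fix some $r$ in the quotient with $r\Vdash \delta\in j^+(X)$. I claim this $r$ works, i.e.\ $X_r\subseteq_I X$, equivalently $X_r\setminus X\in I$, i.e.\ $\Vdash \delta\notin j^+(X_r\setminus X)$. Suppose not; then some condition $s$ forces $\delta\in j^+(X_r\setminus X)=j^+(X_r)\setminus j^+(X)$, using that the elementary $j^+$ commutes with set difference. In particular $s\Vdash \delta\in j^+(X_r)$ and $s\Vdash \delta\notin j^+(X)$. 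By the displayed equivalence the first of these gives $s\Vdash r\in \dot H$, and since we may work in the separative quotient (equivalently in $\mathbb{B}(Coll(\omega_1,<j(\kappa))^M/G)$) this means $s\leq r$. But then $s\leq r\Vdash \delta\in j^+(X)$ contradicts $s\Vdash \delta\notin j^+(X)$. Hence no such $s$ exists and $X_r\subseteq_I X$.

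I expect the main obstacle to be the verification of the biconditional, which is where all the moving parts meet: transferring injectivity of $a\mapsto\sup a$ from $B$ to $j(B)$, correctly identifying $j^+(G)$ with the combined generic $G*H$ and reading $r\in G*H$ as $r\in H$ in the quotient, and keeping the forcing bookkeeping straight. Once that is in place, the density argument is a short separativity computation, and the remaining routine points (that $j^+$ respects Boolean operations, and that passing to the separative quotient costs nothing given the earlier identifications with $\mathbb{B}(\cdot)$) require no real work.
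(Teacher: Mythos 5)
Your proof is correct and follows essentially the same route as the paper: pick $r$ forcing $\delta\in j^+(X)$, suppose some $s$ forces $\delta\in j^+(X_r)\setminus j^+(X)$, deduce $s\Vdash r\in\dot H$ and hence $s\leq r$ by separativity, and derive the contradiction. The biconditional you verify up front (via injectivity of $a\mapsto\sup a$ on $j(B)$ and $j''\lambda\in j(B)$) is exactly what the paper uses implicitly in its ``in particular, $r'$ forces $j(f_r)(j''\lambda)=r\in j^+(G)$'' step, so making it explicit is a faithful elaboration rather than a different argument.
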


\begin{proof}
Let $r\in Coll(\omega_1, <j(\kappa))^M/G$ force that $\delta\in j^+(X)$. We show that $X_r\subseteq_{I} X$. Otherwise, there is some $r'$ forcing that $\delta\in j^+(X_r)\setminus j^+(X)$. In particular, $r'$ forces that $j(f_r)(j''\lambda)=r\in j^+(G)$. By the separability of the forcing, $r'\leq_{Coll(\omega_1, <j(\kappa))^M/G} r$. This contradicts the fact that $r$ forces $\delta\in j^+(X)$.
\end{proof}

The following is now immediate from the preceding arguments, coupled with Theorem \ref{theorem: main}.

\begin{theorem}\begin{enumerate}
\item If $\kappa$ is measurable, then in $V^{Coll(\omega_1, <\kappa)}$, $\aleph_2\to_{hc}(\aleph_2)^2_{\omega}$. 
\item If $\kappa$ is supercompact, then in $V^{Coll(\omega_1, <\kappa)}$, for all regular cardinal $\lambda\geq \aleph_2$, $\lambda\to_{hc}(\lambda)^2_{\omega}$.
\end{enumerate}
\end{theorem}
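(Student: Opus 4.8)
The plan is to read off both statements from the ideal construction carried out in this section together with Theorem \ref{theorem: main}; the only genuine content is to match the hypotheses and to track the identity of $\kappa$ in the collapse extension. Recall that the preceding construction takes a cardinal $\kappa$ that is $\lambda$-supercompact for some regular $\lambda \geq \kappa$ and produces, in $V^{Coll(\omega_1,<\kappa)}$, a $\kappa$-complete uniform ideal on $\lambda$ admitting a dense $\sigma$-closed family of positive sets. Since $Coll(\omega_1,<\kappa)$ makes $\kappa=\aleph_2$, that ideal is in particular countably complete and uniform on the regular cardinal $\lambda$, so it meets the hypothesis of Theorem \ref{theorem: main}, yielding $\lambda \to_{hc}(\lambda)^2_\omega$ (indeed $\lambda \to_{hc,<4}(\lambda)^2_\omega$).

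For (1), I would first recall that a measurable cardinal is $\kappa$-supercompact: a normal measure on $\kappa$ induces a fine normal ultrafilter on $P_\kappa\kappa$, so the construction of this section (Solovay's lemma included) applies with $\lambda=\kappa$. It produces, in $V^{Coll(\omega_1,<\kappa)}$, a countably complete uniform ideal on $\kappa=\aleph_2$ carrying a dense $\sigma$-closed collection of positive sets, whence Theorem \ref{theorem: main} gives $\aleph_2 \to_{hc}(\aleph_2)^2_\omega$.

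For (2), supercompactness of $\kappa$ furnishes, for every regular $\lambda \geq \kappa$, a fine normal ultrafilter on $P_\kappa\lambda$, so the construction applies to each such $\lambda$ simultaneously inside the single extension by $Coll(\omega_1,<\kappa)$, giving for each an ideal $I_\lambda$ on $\lambda$ with the required properties. The point to check is that these are exactly the ideals needed: since $\kappa$ is inaccessible, $Coll(\omega_1,<\kappa)$ has size $\kappa$ and is $\kappa$-c.c., hence preserves all cardinals and cofinalities $\geq \kappa$ while collapsing everything in $[\omega_1,\kappa)$ to $\aleph_1$. Therefore the regular cardinals $\geq \aleph_2$ of $V^{Coll(\omega_1,<\kappa)}$ are precisely the $V$-regular cardinals $\geq \kappa$, each of which is $\lambda$-supercompact in $V$. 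Applying Theorem \ref{theorem: main} to $I_\lambda$ for each of them delivers $\lambda \to_{hc}(\lambda)^2_\omega$, which is the assertion.

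Since the statement is flagged as immediate, I do not expect a real obstacle; the only points demanding care are the verification that $\kappa$-completeness of the ideal suffices (it does, as it implies the countable completeness that Theorem \ref{theorem: main} actually requires) and the cardinal-arithmetic bookkeeping in (2) guaranteeing that every target $\lambda$ in the extension corresponds to a $V$-cardinal for which the $\lambda$-supercompactness hypothesis holds.
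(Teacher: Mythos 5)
Your proposal is correct and follows essentially the same route as the paper, which derives the theorem directly from the Galvin--Jech--Magidor/Laver ideal construction of this section combined with Theorem \ref{theorem: main}. The details you supply---that a measurable cardinal is $P_\kappa\kappa$-supercompact, that $\kappa$-completeness gives the countable completeness Theorem \ref{theorem: main} needs, and that the $\kappa$-c.c.\ of $Coll(\omega_1,<\kappa)$ makes the regular cardinals $\geq\aleph_2$ of the extension exactly the $V$-regular cardinals $\geq\kappa$---are precisely the routine verifications the paper leaves implicit in calling the theorem ``immediate.''
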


Some  large cardinal assumption is necessary to establish the consistency of $\kappa\to_{hc}(\kappa)^2_{\omega}$, as shown in \cite{lambiehanson2022note}.

\section{The lengths of the paths}\label{section: lengthPaths}

In Section \ref{section: closedideal}, we have shown that if there exists a $\sigma$-complete uniform ideal on $\omega_2$ admitting a $\sigma$-closed collection of dense positive sets, then $\omega_2\to_{hc, <4} (\omega_2)^2_\omega$. One natural question is  whether we can improve the conclusion to $\omega_2\to_{hc, <3} (\omega_2)^2_\omega$. In this section, we show that the answer is no, at least not from the same hypothesis.

\begin{remark}\label{remark: genericallymeasurable}
If there is a $\sigma$-closed forcing $P$ such that in $V^P$, there is a transitive class $M$ and an elementary embedding $j: V\to M$ with critical point $\kappa$, then $\kappa\to_{hc}(\kappa)^2_\omega$ holds. Essentially the same proof from Section \ref{section: closedideal} works.
\end{remark}

\begin{theorem}
It is consistent relative to the existence of a measurable cardinal that $\aleph_2\to_{hc, <4} (\aleph_2)^2_\omega$ but
 $\aleph_2\not\to_{hc, <3} (\aleph_2)^2_\omega$.
\end{theorem}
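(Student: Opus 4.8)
The plan is to start from a measurable cardinal $\kappa$ and force with $\mathrm{Coll}(\omega_1,<\kappa)$ so that $\kappa=\aleph_2$ in the extension and, by the results of Section \ref{section: consistencyClosed}, $\aleph_2$ carries a countably complete uniform ideal with a dense $\sigma$-closed collection of positive sets. By Theorem \ref{theorem: main} this already gives the positive half, $\aleph_2\to_{hc,<4}(\aleph_2)^2_\omega$. The entire content of the theorem is therefore the negative half: I must either arrange, or recognize as already present, a coloring $c\colon[\aleph_2]^2\to\omega$ witnessing $\aleph_2\not\to_{hc,<3}(\aleph_2)^2_\omega$. The cleanest route is to interleave the Levy collapse with (or follow it by) forcing that adds such a coloring, and then argue that the coloring survives while the ideal hypothesis is preserved.

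The key structural observation is what $\to_{hc,<3}$ demands: for some color $i$ and some $A\in[\aleph_2]^{\aleph_2}$, after deleting any $C\in[A]^{<\aleph_2}$, every pair $\alpha,\beta\in A\setminus C$ is joined either by a single $i$-colored edge or by a $2$-step $i$-colored path through some common neighbor in $A\setminus C$. Negating this means building a coloring so that no color class, on any $\aleph_2$-sized set, has the property that almost all pairs have a common monochromatic neighbor. I would look for a coloring coding a $\square$-like or, more promisingly, a tree-like obstruction: assign to each $\alpha<\aleph_2$ a branch through some fixed tree and color $\{\alpha,\beta\}$ by (a suitable coarsening of) the splitting level of the two branches. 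The point of ``common neighbor in a single color'' failing is that if $\alpha,\beta$ both have an $i$-colored common neighbor $\gamma$, then the three branches pairwise split at a prescribed level, which one can make impossible for a positive fraction of pairs by a counting/Fodor argument. First I would fix the combinatorial gadget (most likely an $\aleph_2$-Aronszajn tree, which exists after the collapse), then verify the forced coloring defeats length-$2$ paths in every color on every cofinal set.

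The main obstacle I expect is \emph{simultaneity}: I need the same model to carry the $\sigma$-closed-dense ideal (forcing $\to_{hc,<4}$) \emph{and} the bad coloring (forcing $\neg\!\to_{hc,<3}$), and these pull in opposite directions, since the ideal is exactly what makes short paths abundant. The resolution is to check that the coloring obstruction lives at a different ``level'' than the connectivity the ideal provides — the ideal guarantees length-$3$ paths (two vertices of $A$ bridged through a pair $\gamma_0,\gamma_1$ on the two $i$-frequent sides, as in the proof of Theorem \ref{theorem: main}), and my gadget only needs to rule out the length-$2$ shortcut. Concretely I would verify that the Aronszajn-tree coloring is chosen $I$-generically, or is added by a factor of the iteration that is absorbed by, or commutes with, the collapse, so that the ideal and its $\sigma$-closed dense set are preserved. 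I would also double-check, via Remark \ref{remark: genericallymeasurable}, that the generic embedding witnessing the ideal property persists after adding the coloring.

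The remaining steps are routine: confirm that $\aleph_2\to_{hc,<4}(\aleph_2)^2_\omega$ follows verbatim from Theorem \ref{theorem: main} applied to the surviving ideal, record that the path produced there genuinely has length $3$ (it is $\alpha,\gamma_0,\gamma_1,\beta$ and cannot in general be shortened, which is precisely why no contradiction with the coloring arises), and assemble the consistency statement over a ground model with a measurable cardinal. The hard part, to emphasize, is the design and verification of the coloring that kills length-$2$ monochromatic paths uniformly in the color and uniformly after deleting any small set, while leaving the length-$3$ connectivity supplied by the ideal intact.
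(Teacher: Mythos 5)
Your outline reproduces the paper's architecture only at the coarsest level (collapse a measurable for the positive relation, add a coloring for the negative one), but the step you yourself identify as ``the hard part'' --- designing a coloring that kills monochromatic paths of length $2$ --- is left as an unsubstantiated sketch, and the sketch as stated would not work. An $\aleph_2$-Aronszajn tree has no cofinal branches, so ``assign to each $\alpha<\aleph_2$ a branch'' must be replaced by nodes and meet levels; but then the only constraint a common $i$-colored neighbor $\gamma$ of $\alpha,\beta$ imposes is the ultrametric inequality $\Delta(\alpha,\beta)\geq\min\{\Delta(\alpha,\gamma),\Delta(\gamma,\beta)\}$, which a coarsening of levels into $\omega$ colors does not convert into any impossibility: nothing in the sketch prevents a color class from being $2$-linked on an $\aleph_2$-sized set modulo small sets, and no counting or Fodor argument is actually supplied. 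Note moreover that $\mathsf{CH}$ holds after $Coll(\omega_1,<\kappa)$, so $\aleph_2$-Aronszajn trees already exist there; if a ground-model-definable gadget of your kind worked, the negative relation would hold outright in the collapse extension and the authors would have needed no further forcing --- indeed they explicitly leave the consistency of $\aleph_2\to_{hc,<3}(\aleph_2)^2_\omega$ open in Section \ref{section: questions}. The paper instead \emph{forces} the negative relation: the final model is $V^{Coll(\omega_1,<\kappa)\ast\mathbb{P}_\kappa}$, where $\mathbb{P}_\kappa$ is the countably closed Komj\'{a}th--Shelah poset of \cite{KomjathShelah}, whose generic adds the witnessing structure (Claim 4 inside the proof of their Theorem 7, with $(*)$ there a consequence of $\aleph_2\to_{hc,<3}(\aleph_2)^2_\omega$).

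The second gap is your preservation step. You propose to preserve the ideal with its $\sigma$-closed dense set through the coloring forcing, checking that the latter is ``absorbed by, or commutes with, the collapse.'' The paper does not preserve the ideal at all: it applies Remark \ref{remark: genericallymeasurable} directly in the final model, and the entire technical content of the proof is the lifting argument this requires --- $Coll(\omega_1,<\kappa)\ast\mathbb{P}_\kappa$ regularly embeds into $Coll(\omega_1,<j(\kappa))$ with countably closed quotient, and, crucially, the quotient $j(\mathbb{P}_\kappa)/H$ is itself countably closed. That last fact is proved via the restriction map $p\mapsto p\restriction\kappa$ and the equivalence $p\in j(\mathbb{P}_\kappa)/H$ iff $p\restriction\kappa\in H$ (Claim \ref{claim: equivalence}), which depends on the specific side-condition clause of $\mathbb{P}_\kappa$ (new $H\in\mathcal{H}'\setminus\mathcal{H}$ must satisfy $H\not\subseteq S$). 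For an arbitrary countably closed ``coloring forcing'' there is no reason the quotient past $j$ should be countably closed, so without an analogue of this claim your positive half does not survive the second forcing step. Your instinct to invoke Remark \ref{remark: genericallymeasurable} is the right one, but as written the proposal contains neither the correct coloring poset nor the quotient-closure argument that makes the two halves coexist.
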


\begin{proof}
Let $\kappa$ be a measurable cardinal.
We will make use of a forcing poset $\mathbb{P}_{\kappa}$ due to Komj\'{a}th-Shelah \cite[Theorem 7]{KomjathShelah}. The final forcing will be $Coll(\omega_1, <\kappa)* \mathbb{P}_{\kappa}$. It follows from Komj\'{a}th-Shelah's work that $\aleph_2\not\to_{hc, <3} (\aleph_2)^2_\omega$ in the final model. More specifically, this was proved in Claim 4 inside the proof of Theorem 7 of \cite{KomjathShelah}. Note that $(*)$ in \cite{KomjathShelah} is a consequence of $\aleph_2\to_{hc, <3} (\aleph_2)^2_\omega$.
 
Let $G\subseteq Coll(\omega_1, <\kappa)* \mathbb{P}_{\kappa}$ be a generic filter over $V$. By Remark \ref{remark: genericallymeasurable} applied in $V[G]$ (namely, replacing the occurrences of ``$V$"  there with ``$V[G]$"), it is enough to check that in a further countably closed forcing extension over $V[G]$, there exists an elementary embedding $j: V[G]\to M$ with critical point $\omega_2^{V[G]}=\kappa$.

To that end, let us recall the definition of $\mathbb{P}_\kappa$: conditions consist of $(S, f, \mathcal{H}, h)$ such that 
\begin{itemize}
\item $S\in [\kappa]^{\leq \aleph_0}$,
\item $f: [S]^2\to [\omega]^\omega$,
\item $\mathcal{H}\subseteq [S]^\omega$, $|\mathcal{H}|\leq \aleph_0$, for each $H\in \mathcal{H}$, $otp(H)=\omega$ and for any $H, H'\in \mathcal{H}$, $H\cap H'$ is finite.
\item if $\alpha\in S$, $H\in \mathcal{H}$ with $\min H < \alpha$, then $|\{\beta\in H: h(H)\in f(\alpha,\beta)\}|\leq 1$.
\end{itemize}
The order is: $(S', f', \mathcal{H}', h')\leq (S, f, \mathcal{H}, h)$ iff $S'\supset S$, $\mathcal{H}'\supset \mathcal{H}$, $f'\restriction [S]^2 = f$, $h'\restriction \mathcal{H} = h$ and for any $H\in \mathcal{H}'-\mathcal{H}$, $H\not\subseteq S$. Note that $\mathbb{P}_\kappa$ is a countably closed forcing of size $\kappa$.

Let $j: V\to M$ witness that $\kappa$ is measurable.
Let $G*H\subseteq Coll(\omega_1,<\kappa)*\mathbb{P}_\kappa$. Let $G^*\subseteq Coll(\omega_1, <j(\kappa))/G*H$ be generic over $V[G*H]$. This is possible since $Coll(\omega_1,<\kappa)*\mathbb{P}_\kappa$ regularly embeds into $Coll(\omega_1, <j(\kappa))$ with a countably closed quotient (see \cite[Theorem 14.3]{CummingsSurvey}). As a result, we can lift $j: V[G]\to M[G^*]$. In order to lift further to $V[G*H]$, we need to force $j(\mathbb{P}_\kappa)/H$ over $V[G^*]$. It suffices to show that in $V[G^*]$, $j(\mathbb{P}_\kappa)/H$ is countably closed.

Suppose $\langle p_n=_{def} (S_n, f_n, \mathcal{H}_n, h_n): n\in \omega \rangle \subseteq j(\mathbb{P}_\kappa)/H$ is a decreasing sequence. We want to show that $q=\bigcup_{n\in \omega} p_n$ is the lower bound desired.
 For this, we only need to verify that $q\in j(\mathbb{P}_\kappa)/H$. More explicitly, we need to verify that $q$ is compatible with $h\in H$ for any $h\in H$. For each $p=(S_p,f_p,\mathcal{H}_p, h_p)\in j(\mathbb{P}_\kappa)$, let $p\restriction \kappa$ denote the condition: $(S_p\cap \kappa, f_p\restriction [S_p\cap \kappa]^2, \mathcal{H}_p\cap P(\kappa), h_p\restriction (\mathcal{H}_p\cap P(\kappa)))$. It is not hard to check that $p\restriction \kappa\in \mathbb{P}_\kappa$ and $p\leq_{j(\mathbb{P}_\kappa)} p\restriction \kappa$.

\begin{claim}\label{claim: equivalence}
$p\in j(\mathbb{P}_\kappa)/H$ iff $p\restriction \kappa\in H$.
\end{claim}

\begin{proof}[Proof of the claim]
If $p\restriction \kappa\in H$, to see $p\in j(\mathbb{P}_\kappa)/H$, it suffices to see that for any $r\leq_{\mathbb{P}_\kappa} p\restriction \kappa$ and $r\in H$, $r$ is compatible with $p$. To check that $r\cup p$ can be extended to a condition, it suffices to check that for any $B\in \mathcal{H}_r\setminus\mathcal{H}_p$, $B\not\subseteq S_p$ and any $B\in \mathcal{H}_p\setminus\mathcal{H}_r$, $B\not\subseteq S_r$.
To see the former, note that if $B\in \mathcal{H}_r\setminus \mathcal{H}_p$, then $B\in \mathcal{H}_r \setminus\mathcal{H}_{p\restriction \kappa}$, since $r\leq p\restriction \kappa$, $B\not\subseteq S_p\cap \kappa$. Since $B\subseteq \kappa$, we have $B\not\subseteq S_p$. To see the latter, note that $B\in \mathcal{H}_p\setminus\mathcal{H}_r$ implies that $B\in \mathcal{H}_p \setminus \mathcal{H}_{p\restriction \kappa}$. In particular, $B\cap [\kappa,j(\kappa))\neq \emptyset$. Hence $B\not\subseteq S_r$ since $S_r\subseteq \kappa$.

If $p\in j(\mathbb{P}_\kappa)/H$, then for any $h\in H$, $h$ and $p$ are compatible. In particular, since $p\leq p\restriction \kappa$, we know that any $h\in H$ is compatible with $p\restriction \kappa$. This implies that $p\restriction \kappa\in H$.
\end{proof}

To finish the proof, since for each $n\in \omega$, by Claim \ref{claim: equivalence} and the fact that $p_n\in j(\mathbb{P}_\kappa)/H$, we have that $p_n\restriction \kappa\in H$. As a result, we must have $q\restriction \kappa\in H$. By Claim \ref{claim: equivalence}, we have $q\in j(\mathbb{P}_\kappa)/H$. \end{proof}

\section{Open questions}\label{section: questions}

\begin{question}
Starting from the existence of a weakly compact cardinal, can one force that $\aleph_2\to_{hc}(\aleph_2)^2_{\omega}$?
\end{question}
It is possible to use a weaker assumption than the existence of a measurable cardinal to run the proof of Theorem \ref{theorem: main}. In particular, we can use a strongly Ramsey cardinal instead. Recall that $\kappa$ is \emph{strongly Ramsey} if for any $A\subset \kappa$, there is a $\kappa$-model $M$ containing $A$ (namely, $M$ is a transitive model of $\mathrm{ZFC}^-$ containing $\kappa$ and is closed under $<\kappa$-sequences), such that there exists a $\kappa$-complete $M$-ultrafilter that is weakly amenable to $M$, meaning that for any $\mathcal{F}\in M$ and $|\mathcal{F}|\leq \kappa$, $\mathcal{F}\cap U \in M$. See \cite{Gitman} for more information on Ramsey-like cardinals. However, being a strongly Ramsey cardinal is a much stronger condition than being a weakly compact cardinal.

\begin{question}
Is $\aleph_2\to_{hc,<3} (\aleph_2)^2_{\omega}$ consistent?
\end{question}

\begin{question}
Can one separate $\aleph_2\to_{hc,<m} (\aleph_2)^2_{\omega}$ from $\aleph_2\to_{hc,<n} (\aleph_2)^2_{\omega}$ where $4\leq m<n\leq \omega$?
\end{question}

\begin{question}
Is $\aleph_2\not\to_{hc}(\aleph_1)^2_{\omega}$ consistent?
\end{question}

%\begin{definition}\label{definition: k-precip}
%Fix a regular uncountable cardinal $\kappa$ and $k \in \omega$ with $n\geq 3$.
%We say an ideal $I$ on $\kappa$ is \emph{n-precipitous} if Player Empty does not have a winning strategy in the following game $G_{I}^n$ with perfect information: Player Empty and Nonempty take turns playing a $\subseteq$-decreasing sequence of $k$-tuples of $I$-positive sets $\langle (A_n^i)_{i<k}: n\in \omega\rangle$ with Player Empty starting the game. Player Nonempty wins iff there exist $\alpha_0<\alpha_1<\cdots <\alpha_{k-1}$ such that $\alpha_i\in \bigcap_{n\in \omega} A_n^i$ for all $i<k$.
%\end{definition}

Our next problem is more open-ended, which concerns the natural generalizations of $2$-precipitous ideals. Recall that we have shown that the existence of a uniform normal $2$-precipitous ideal on $\kappa$ implies that $\kappa\geq \omega_2$.

\begin{problem}
For $n\geq 2$ and an ideal $I$ on $\kappa\geq \aleph_2$, find a natural definition of \emph{$n$-precipitousness} that generalizes Definition \ref{definition: 2-precip} in the case that $n=2$, such that if $\kappa$ carries a uniform normal $n$-precipitous ideal, then $\kappa \geq \aleph_n$.
\end{problem}

The referee suggested the following interesting problem, which is worth further investigation. Fix cardinals $\kappa,\lambda,\theta$ and an ideal $\mathcal{J}$ on $\lambda$ containing $[\lambda]^{<\lambda}$. For any set $X$ of order type $\lambda$, let $\mathcal{J}_X$ be the ideal on $X$ moved from $\mathcal{J}$ using the order isomorphism between $X$ and $\lambda$.

\begin{problem}

Investigate the following partition relations: $\kappa\to_{\mathcal{J}-hc}(\lambda)^2_\theta$ which asserts for any $c: [\kappa]^2\to \theta$, there are $X\subset \kappa$ of order type $\lambda$ and $i\in \theta$ such that for any $J\in \mathcal{J}_X$, $(X\setminus J, c^{-1}(i)\cap [X\setminus J]^2)$ is connected.
\end{problem}

\section{Acknowledgement}
We thank Stevo Todorcevic and Spencer Unger for helpful discussions and comments. We are grateful to the anonymous referee, whose helpful suggestions, corrections and comments greatly improve the exposition of this paper.

\bibliographystyle{amsplain}
\bibliography{bib}

\providecommand{\bysame}{\leavevmode\hbox to3em{\hrulefill}\thinspace}
\providecommand{\MR}{\relax\ifhmode\unskip\space\fi MR }
% \MRhref is called by the amsart/book/proc definition of \MR.
\providecommand{\MRhref}[2]{%
  \href{http://www.ams.org/mathscinet-getitem?mr=#1}{#2}
}
\providecommand{\href}[2]{#2}
\begin{thebibliography}{10}

\bibitem{Abraham}
Uri Abraham, \emph{Aronszajn trees on {$\aleph \sb{2}$} and {$\aleph \sb{3}$}},
  Ann. Pure Appl. Logic \textbf{24} (1983), no.~3, 213--230. \MR{717829}

\bibitem{BannisterBergfalkMooreTodorcevic}
N.~Bannister, J.~Bergfalk, J.~T. Moore, and S.~Todorcevic, \emph{A descriptive
  approach to higher derived limits}, pre-print, 2023.

\bibitem{bergfalk}
J.~Bergfalk, \emph{Ramsey theory for monochromatically well-connected subsets},
  Fund. Math \textbf{249} (2020), no.~1, 95--103.

\bibitem{BerfalkHrusakLambie}
J.~Bergfalk, M.~Hru\v{s}\'ak, and C.~Lambie-Hanson, \emph{Simultaneously
  vanishing higher derived limits without large cardinals}, J. Math. Log.
  \textbf{23} (2023), no.~1, 40 pages.

\bibitem{BergfalkHrusakShelah}
J.~Bergfalk, M.~Hru\v{s}\'{a}k, and S.~Shelah, \emph{Ramsey theory for highly
  connected monochromatic subgraphs}, Acta Math. Hungar. \textbf{163} (2021),
  no.~1, 309--322. \MR{4217971}

\bibitem{BergfalkLambie}
J.~Bergfalk and C.~Lambie-Hanson, \emph{Simultaneously vanishing higher derived
  limits}, Forum Math. Pi \textbf{9} (2021), 31 pages.

\bibitem{CummingsSurvey}
James Cummings, \emph{Iterated forcing and elementary embeddings}, Handbook of
  set theory. {V}ols. 1, 2, 3, Springer, Dordrecht, 2010, pp.~775--883.
  \MR{2768691}

\bibitem{ErdosHajnal}
P.~Erd\H{o}s and A.~Hajnal, \emph{On complete topological subgraphs of certain
  graphs}, Ann. Univ. Sci. Budapest. E\"{o}tv\"{o}s Sect. Math. \textbf{7}
  (1964), 143--149. \MR{173247}

\bibitem{MM}
M.~Foreman, M.~Magidor, and S.~Shelah, \emph{Martin's maximum, saturated
  ideals, and nonregular ultrafilters. {I}}, Ann. of Math. (2) \textbf{127}
  (1988), no.~1, 1--47. \MR{924672}

\bibitem{ForemanHandbook}
Matthew Foreman, \emph{Ideals and generic elementary embeddings}, Handbook of
  set theory. {V}ols. 1, 2, 3, Springer, Dordrecht, 2010, pp.~885--1147.
  \MR{2768692}

\bibitem{GalvinJechMagidor}
F.~Galvin, T.~Jech, and M.~Magidor, \emph{An ideal game}, J. Symbolic Logic
  \textbf{43} (1978), no.~2, 284--292. \MR{485391}

\bibitem{Gitman}
Victoria Gitman, \emph{Ramsey-like cardinals}, J. Symbolic Logic \textbf{76}
  (2011), no.~2, 519--540. \MR{2830415}

\bibitem{JechPrikry}
Thomas Jech and Karel Prikry, \emph{On ideals of sets and the power set
  operation}, Bull. Amer. Math. Soc. \textbf{82} (1976), no.~4, 593--595.
  \MR{505504}

\bibitem{KomjathShelah}
P.~Komj\'{a}th and S.~Shelah, \emph{Monocolored topological complete graphs in
  colorings of uncountable complete graphs}, Acta Math. Hungar. \textbf{163}
  (2021), no.~1, 71--84. \MR{4217959}

\bibitem{lambiehanson2022note}
Chris Lambie-Hanson, \emph{A note on highly connected and well-connected ramsey
  theory}, to appear.

\bibitem{Laver}
R.~Laver, \emph{Precipitousness in forcing extensions}, Israel J. Math.
  \textbf{48} (1984), no.~2-3, 97--108.

\bibitem{Mitchell}
William Mitchell, \emph{Aronszajn trees and the independence of the transfer
  property}, Ann. Math. Logic \textbf{5} (1972/73), 21--46. \MR{313057}

\bibitem{VelickovicVignati}
B.~Veli\v{c}kovi\'c and A.~Vignati, \emph{Non-vanishing higher derived limits},
  pre-print, 2023.

\bibitem{HODDichotomy}
W.~Hugh Woodin, Jacob Davis, and Daniel Rodr\'{\i}guez, \emph{The {HOD}
  dichotomy}, Appalachian set theory 2006--2012, London Math. Soc. Lecture Note
  Ser., vol. 406, Cambridge Univ. Press, Cambridge, 2013, pp.~397--418.
  \MR{3821636}

\end{thebibliography}

\end{document}